\providecommand{\U}[1]{\protect\rule{.1in}{.1in}}
\newcounter{exer}
\theoremstyle{definition}
\newtheorem{theo}{Theorem}[section]
\newenvironment{theorem}[1][]
{\begin{theo}[#1]\begin{leftbar}}
{\end{leftbar}\end{theo}}
\newtheorem{lem}[theo]{Lemma}
\newenvironment{lemma}[1][]
{\begin{lem}[#1]\begin{leftbar}}
{\end{leftbar}\end{lem}}
\newtheorem{prop}[theo]{Proposition}
\newenvironment{proposition}[1][]
{\begin{prop}[#1]\begin{leftbar}}
{\end{leftbar}\end{prop}}
\newtheorem{defi}[theo]{Definition}
\newenvironment{definition}[1][]
{\begin{defi}[#1]\begin{leftbar}}
{\end{leftbar}\end{defi}}
\newtheorem{remk}[theo]{Remark}
\newenvironment{remark}[1][]
{\begin{remk}[#1]\begin{leftbar}}
{\end{leftbar}\end{remk}}
\newtheorem{coro}[theo]{Corollary}
\newenvironment{corollary}[1][]
{\begin{coro}[#1]\begin{leftbar}}
{\end{leftbar}\end{coro}}
\newtheorem{conv}[theo]{Convention}
\newtheorem{quest}[theo]{Question}
\newtheorem{warn}[theo]{Warning}
\newtheorem{soln}{Solution}
\newtheorem{conj}[theo]{Conjecture}
\newtheorem{exam}[theo]{Example}
\newenvironment{example}[1][]
{\begin{exam}[#1]\begin{leftbar}}
{\end{leftbar}\end{exam}}
\newtheorem{exmp}[exer]{Exercise}
\let\sumnonlimits\sum
\let\prodnonlimits\prod
\renewcommand{\sum}{\sumnonlimits\limits}
\renewcommand{\prod}{\prodnonlimits\limits}
\newenvironment{verlong}{}{}
\begin{document}

\author{Darij Grinberg, Karthik Karnik, Anya Zhang}
\title{From Chio Pivotal Condensation to the Matrix-Tree theorem}
\date{\today}
\maketitle

\begin{abstract}
We show a determinant identity which generalizes both the Chio pivotal
condensation theorem and the Matrix-Tree theorem.

\end{abstract}

\section{Introduction}

The Chio pivotal condensation theorem (Theorem \ref{thm.chio} below, or
\cite[Theorem 3.6.1]{Eves}) is a simple particular case of the Dodgson-Muir
determinantal identity (\cite[(4)]{BerBru08}), which can be used to reduce the
computation of an $n\times n$-determinant to that of an $\left(  n-1\right)
\times\left(  n-1\right)  $-determinant (provided that an entry of the matrix
can be divided by\footnote{We work with matrices over arbitrary commutative
rings, so this is not a moot point. Of course, if the ring is a field, then
this just means that the matrix has a nonzero entry.}). On the other hand, the
Matrix-Tree theorem (Theorem \ref{thm.mtt}, or \cite[Section 4]{Zeilbe}, or
\cite[Theorem 1]{Verstraete}) expresses the number of spanning trees of a
graph as a determinant\footnote{And not just the number; rather, a
\textquotedblleft weighted number\textquotedblright\ from which the spanning
trees can be read off if the weights are chosen generically enough.}. In this
note, we show that these two results have a common generalization (Theorem
\ref{thm.supergen}). As we have tried to keep the note self-contained, using
only the well-known fundamental properties of determinants, it also provides
new proofs for both results.

\subsection{Acknowledgments}

We thank the PRIMES project at MIT, during whose 2015 iteration this paper was
created, and in particular George Lusztig for sponsoring the first author's
mentorship in this project.

\section{The theorems}

We shall use the (rather standard) notations defined in \cite{detnotes}. In
particular, $\mathbb{N}$ means the set $\left\{  0,1,2,\ldots\right\}  $. For
any $n\in\mathbb{N}$, we let $S_{n}$ denote the group of permutations of the
set $\left\{  1,2,\ldots,n\right\}  $. The $n\times m$-matrix whose $\left(
i,j\right)  $-th entry is $a_{i,j}$ for each $\left(  i,j\right)  \in\left\{
1,2,\ldots,n\right\}  \times\left\{  1,2,\ldots,m\right\}  $ will be denoted
by $\left(  a_{i,j}\right)  _{1\leq i\leq n,\ 1\leq j\leq m}$.

Let $\mathbb{K}$ be a commutative ring. We shall regard $\mathbb{K}$ as fixed
throughout this note (so we won't always write \textquotedblleft Let
$\mathbb{K}$ be a commutative ring\textquotedblright\ in our propositions);
the notion \textquotedblleft matrix\textquotedblright\ will always mean
\textquotedblleft matrix with entries in $\mathbb{K}$\textquotedblright.

\subsection{Chio Pivotal Condensation}

We begin with a statement of the Chio Pivotal Condensation theorem (see, e.g.,
\cite[Theorem 0.1]{KarZha16} and the reference therein):

\begin{theorem}
\label{thm.chio}Let $n\geq2$ be an integer. Let $A=\left(  a_{i,j}\right)
_{1\leq i\leq n,\ 1\leq j\leq n}\in\mathbb{K}^{n\times n}$ be a matrix. Then,
\[
\det\left(  \left(  a_{i,j}a_{n,n}-a_{i,n}a_{n,j}\right)  _{1\leq i\leq
n-1,\ 1\leq j\leq n-1}\right)  =a_{n,n}^{n-2}\cdot\det\left(  \left(
a_{i,j}\right)  _{1\leq i\leq n,\ 1\leq j\leq n}\right)  .
\]

\end{theorem}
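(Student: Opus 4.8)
The plan is to exhibit the left-hand matrix as the Schur-type complement that arises from column-reducing $A$, and then to finish with a cancellation that is legitimate only after passing to a polynomial ring. Write $B = \left( a_{i,j}a_{n,n} - a_{i,n}a_{n,j} \right)_{1 \leq i \leq n-1,\ 1 \leq j \leq n-1}$ for the matrix on the left. I would first reduce to the \emph{universal} case: both sides of the asserted identity are polynomials in the $n^2$ entries $a_{i,j}$ with integer coefficients, so it suffices to prove the identity in the polynomial ring $\ZZ\left[ a_{i,j} : 1 \leq i,j \leq n \right]$ with the $a_{i,j}$ treated as independent indeterminates; the general case then follows by applying the evaluation homomorphism that sends each indeterminate to the corresponding entry of the given matrix. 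The point of this reduction is that $\ZZ\left[ a_{i,j} \right]$ is an integral domain, so the element $a_{n,n}$ will be cancellable at the end.

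Working in this domain, I would build an auxiliary $n \times n$ matrix $C$ from $A$ by the column operations: for each $j \in \{1, \ldots, n-1\}$ replace the $j$-th column by $a_{n,n} \cdot (\text{column } j) - a_{n,j} \cdot (\text{column } n)$, and leave column $n$ unchanged. Scaling $n-1$ columns each by $a_{n,n}$ multiplies the determinant by $a_{n,n}^{n-1}$, while the subsequent additions of multiples of column $n$ leave it unchanged; hence $\det C = a_{n,n}^{n-1} \det A$. On the other hand, a direct computation of the entries of $C$ shows that its top-left $(n-1) \times (n-1)$ block is exactly $B$, that its last row is $(0, 0, \ldots, 0, a_{n,n})$, and that its last column agrees with that of $A$.

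Expanding $\det C$ along this last row (Laplace expansion) collapses it to the single nonzero bottom entry times its cofactor, giving $\det C = a_{n,n} \cdot \det B$. Combining the two evaluations of $\det C$ yields $a_{n,n} \det B = a_{n,n}^{n-1} \det A = a_{n,n} \cdot a_{n,n}^{n-2} \det A$, and cancelling the factor $a_{n,n}$ produces the claim. I expect the only genuine obstacle to be exactly this final cancellation: over an arbitrary commutative ring $a_{n,n}$ may be a zero-divisor, so the step is invalid in general, and it is precisely to license it that the proof must be carried out first in the integral domain $\ZZ\left[ a_{i,j} \right]$ and only afterwards specialized. Everything else reduces to bookkeeping about how elementary column operations and a single-row Laplace expansion affect a determinant.
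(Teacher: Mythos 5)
Your proof is correct, but it follows a genuinely different route from the paper. The paper never proves Theorem \ref{thm.chio} directly: it obtains it as the special case $f\left(i\right)=n$ of the far more general Theorem \ref{thm.chio-gen} \textbf{(b)}, using that this particular $f$ is $n$-potent and satisfies $\operatorname*{abut}\nolimits_{f}A=a_{n,n}^{n-2}$; and the proof of Theorem \ref{thm.chio-gen} passes to the rational function field $\QQ\left(  X_{i,j}\right)  $ so that \emph{all} entries of $A$ become invertible, divides each row by $a_{i,n}$, and factors the resulting matrix as $Z_{f}E$, where $\det\left(  Z_{f}\right)  \in\left\{  0,1\right\}  $ detects $n$-potence. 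Your argument --- scale columns $1,\ldots,n-1$ by $a_{n,n}$, subtract $a_{n,j}$ times column $n$ from column $j$, read off the block structure with last row $\left(  0,\ldots,0,a_{n,n}\right)  $, expand along that row, and cancel a single factor of $a_{n,n}$ --- is instead the classical column-operations proof of Chio condensation; the paper itself sketches essentially this in a footnote, attributing the row-operation version to \cite[Theorem 3.6.1]{Eves} and noting the universal-identities repair as in \cite{Conrad09}. The comparison cuts both ways. Your reduction is lighter than the paper's: you need only the polynomial ring $\ZZ\left[  a_{i,j}\right]  $ (an integral domain, so the nonzero indeterminate $a_{n,n}$ cancels), never its fraction field, and no entry is ever inverted. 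But your method is also intrinsically narrower: a column operation must subtract the \emph{same} multiple of column $n$ in every row, so it can only manufacture entries $a_{i,j}a_{n,n}-a_{i,n}a_{n,j}$ with the fixed pivot row $n$, never the row-dependent combinations $a_{i,j}a_{f\left(  i\right)  ,n}-a_{i,n}a_{f\left(  i\right)  ,j}$ appearing in Theorem \ref{thm.chio-gen}; it therefore cannot be pushed to the generalization from which the paper extracts the Matrix-Tree theorem (Theorem \ref{thm.mtt}). As a standalone proof of Theorem \ref{thm.chio}, yours is shorter and self-contained; the paper's heavier machinery is the price of the generality it needs.
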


\begin{example}
If $n=3$ and $A=\left(
\begin{array}
[c]{ccc}%
a & a^{\prime} & a^{\prime\prime}\\
b & b^{\prime} & b^{\prime\prime}\\
c & c^{\prime} & c^{\prime\prime}%
\end{array}
\right)  $, then Theorem \ref{thm.chio} says that%
\[
\det\left(
\begin{array}
[c]{cc}%
ac^{\prime\prime}-a^{\prime\prime}c & a^{\prime}c^{\prime\prime}%
-a^{\prime\prime}c^{\prime}\\
bc^{\prime\prime}-b^{\prime\prime}c & b^{\prime}c^{\prime\prime}%
-b^{\prime\prime}c^{\prime}%
\end{array}
\right)  =\left(  c^{\prime\prime}\right)  ^{3-2}\cdot\det\left(
\begin{array}
[c]{ccc}%
a & a^{\prime} & a^{\prime\prime}\\
b & b^{\prime} & b^{\prime\prime}\\
c & c^{\prime} & c^{\prime\prime}%
\end{array}
\right)  .
\]

\end{example}

Theorem \ref{thm.chio} (originally due to F\'{e}lix Chio in 1853\footnote{See
\cite[footnote 2]{Heinig} and \cite[\S 2]{Abeles} for some historical
background.}) is nowadays usually regarded either as a particular case of the
Dodgson-Muir determinantal identity (\cite[(4)]{BerBru08}), or as a relatively
easy exercise on row operations and the method of universal
identities\footnote{In more detail:
\par
\begin{itemize}
\item In order to derive Theorem \ref{thm.chio} from \cite[(4)]{BerBru08}, it
suffices to set $k=n-1$ and recognize the right hand side of \cite[(4)]%
{BerBru08} as $\det\left(  \left(  a_{i,j}a_{n,n}-a_{i,n}a_{n,j}\right)
_{1\leq i\leq n-1,\ 1\leq j\leq n-1}\right)  $.
\par
\item A proof of Theorem \ref{thm.chio} using row operations can be found in
\cite[Theorem 3.6.1]{Eves}, up to a few minor issues: First of all,
\cite[Theorem 3.6.1]{Eves} proves not exactly Theorem \ref{thm.chio} but the
analogous identity
\[
\det\left(  \left(  a_{i+1,j+1}a_{1,1}-a_{i+1,1}a_{1,j+1}\right)  _{1\leq
i\leq n-1,\ 1\leq j\leq n-1}\right)  =a_{1,1}^{n-2}\cdot\det\left(  \left(
a_{i,j}\right)  _{1\leq i\leq n,\ 1\leq j\leq n}\right)  .
\]
Second, \cite[Theorem 3.6.1]{Eves} assumes $a_{1,1}$ to be invertible (and all
$a_{i,j}$ to belong to a field); however, assumptions like this can easily be
disposed of using the method of universal identities (see \cite{Conrad09}).
\end{itemize}
\par
A more explicit and self-contained proof of Theorem \ref{thm.chio} can be
found in \cite{KarZha16}. References to other proofs appear in \cite[\S 2]%
{Abeles}.}. We, however, shall generalize it in a different direction.

\subsection{Generalization, step 1}

Our generalization will proceed in two steps. In the first step, we shall
replace some of the $n$'s on the left hand side by $f\left(  i\right)  $'s
(see Theorem \ref{thm.chio-gen} below). We first define some notations:

\begin{definition}
Let $n$ be a positive integer. Let $f:\left\{  1,2,\ldots,n\right\}
\rightarrow\left\{  1,2,\ldots,n\right\}  $ be any map such that $f\left(
n\right)  =n$.

We say that the map $f$ is $n$\textit{-potent} if for every $i\in\left\{
1,2,\ldots,n\right\}  $, there exists some $k\in\mathbb{N}$ such that
$f^{k}\left(  i\right)  =n$. (In less formal terms, $f$ is $n$-potent if and
only if every element of $\left\{  1,2,\ldots,n\right\}  $ eventually arrives
at $n$ when being subjected to repeated application of $f$.)
\end{definition}

(Note that, by definition, any $n$-potent map $f:\left\{  1,2,\ldots
,n\right\}  \rightarrow\left\{  1,2,\ldots,n\right\}  $ must satisfy $f\left(
n\right)  =n$.)

\begin{example}
For this example, let $n=3$. The map $\left\{  1,2,3\right\}  \rightarrow
\left\{  1,2,3\right\}  $ sending $1,2,3$ to $2,1,3$, respectively, is not
$n$-potent (because applying it repeatedly to $1$ can only give $1$ or $2$,
but never $3$). The map $\left\{  1,2,3\right\}  \rightarrow\left\{
1,2,3\right\}  $ sending $1,2,3$ to $3,3,2$, respectively, is not $n$-potent
(since it does not send $n$ to $n$). The map $\left\{  1,2,3\right\}
\rightarrow\left\{  1,2,3\right\}  $ sending $1,2,3$ to $3,1,3$, respectively,
is $n$-potent (indeed, every element of $\left\{  1,2,3\right\}  $ goes to $3$
after at most two applications of this map).
\end{example}

\begin{remark}
\label{rmk.n-potent.trees}Given a positive integer $n$, the $n$-potent maps
$f:\left\{  1,2,\ldots,n\right\}  \rightarrow\left\{  1,2,\ldots,n\right\}  $
are in 1-to-1 correspondence with the trees with vertex set $\left\{
1,2,\ldots,n\right\}  $. Namely, an $n$-potent map $f$ corresponds to the tree
whose edges are $\left\{  i,f\left(  i\right)  \right\}  $ for all
$i\in\left\{  1,2,\ldots,n-1\right\}  $. If we regard the tree as a rooted
tree with root $n$, and if we direct every edge towards the root, then the
edges are $\left(  i,f\left(  i\right)  \right)  $ for all $i\in\left\{
1,2,\ldots,n-1\right\}  $.
\end{remark}

\begin{remark}
\label{rmk.n-potent.atleast1}Let $n\geq2$ be an integer. Let $f:\left\{
1,2,\ldots,n\right\}  \rightarrow\left\{  1,2,\ldots,n\right\}  $ be any
$n$-potent map. Then:

\textbf{(a)} There exists some $g\in\left\{  1,2,\ldots,n-1\right\}  $ such
that $f\left(  g\right)  =n$.

\textbf{(b)} We have $\left\vert f^{-1}\left(  n\right)  \right\vert \geq2$.
\end{remark}

The (very simple) proof of Remark \ref{rmk.n-potent.atleast1} can be found in
the Appendix (Section \ref{sect.app}).

\begin{definition}
\label{def.n-potent.weightabut}Let $n\geq2$ be an integer. Let $A=\left(
a_{i,j}\right)  _{1\leq i\leq n,\ 1\leq j\leq n}\in\mathbb{K}^{n\times n}$ be
an $n\times n$-matrix. Let $f:\left\{  1,2,\ldots,n\right\}  \rightarrow
\left\{  1,2,\ldots,n\right\}  $ be any $n$-potent map.

\textbf{(a)} We define an element $\operatorname*{weight}\nolimits_{f}A$ of
$\mathbb{K}$ by%
\[
\operatorname*{weight}\nolimits_{f}A=\prod_{i=1}^{n-1}a_{i,f\left(  i\right)
}.
\]

\textbf{(b)} We define an element $\operatorname*{abut}\nolimits_{f}A$ of
$\mathbb{K}$ by%
\[
\operatorname*{abut}\nolimits_{f}A=a_{n,n}^{\left\vert f^{-1}\left(  n\right)
\right\vert -2}\prod_{\substack{i\in\left\{  1,2,\ldots,n-1\right\}
;\\f\left(  i\right)  \neq n}}a_{f\left(  i\right)  ,n}.
\]
(This is well-defined, since Remark \ref{rmk.n-potent.atleast1} \textbf{(b)}
shows that $\left\vert f^{-1}\left(  n\right)  \right\vert -2\in\mathbb{N}$.)
\end{definition}

\begin{remark}
\label{rmk.n-potent.abut}Let $n$, $A$ and $f$ be as in Definition
\ref{def.n-potent.weightabut}. Here are two slightly more intuitive ways to
think of $\operatorname*{abut}\nolimits_{f}A$:

\textbf{(a)} If $a_{n,n}\in\mathbb{K}$ is invertible, then
$\operatorname*{abut}\nolimits_{f}A$ is simply $\dfrac{1}{a_{n,n}}\prod
_{i\in\left\{  1,2,\ldots,n-1\right\}  }a_{f\left(  i\right)  ,n}$.

\textbf{(b)} Remark \ref{rmk.n-potent.atleast1} \textbf{(a)} shows that there
exists some $g\in\left\{  1,2,\ldots,n-1\right\}  $ such that $f\left(
g\right)  =n$. Fix such a $g$. Then,%
\[
\operatorname*{abut}\nolimits_{f}A=\prod_{\substack{i\in\left\{
1,2,\ldots,n-1\right\}  ;\\i\neq g}}a_{f\left(  i\right)  ,n}.
\]

\end{remark}

The (nearly trivial) proof of Remark \ref{rmk.n-potent.abut} is again found in
the Appendix.

Now, we can state our first generalization of Theorem \ref{thm.chio}:

\begin{theorem}
\label{thm.chio-gen}Let $n$ be a positive integer. Let $A=\left(
a_{i,j}\right)  _{1\leq i\leq n,\ 1\leq j\leq n}\in\mathbb{K}^{n\times n}$ be
an $n\times n$-matrix. Let $f:\left\{  1,2,\ldots,n\right\}  \rightarrow
\left\{  1,2,\ldots,n\right\}  $ be any map such that $f\left(  n\right)  =n$.

Let $B$ be the $\left(  n-1\right)  \times\left(  n-1\right)  $-matrix%
\[
\left(  a_{i,j}a_{f\left(  i\right)  ,n}-a_{i,n}a_{f\left(  i\right)
,j}\right)  _{1\leq i\leq n-1,\ 1\leq j\leq n-1}\in\mathbb{K}^{\left(
n-1\right)  \times\left(  n-1\right)  }.
\]
{}

\textbf{(a)} If the map $f$ is not $n$-potent, then $\det B=0$.

\textbf{(b)} Assume that $n\geq2$. Assume that the map $f$ is $n$-potent.
Then,%
\[
\det B=\left(  \operatorname*{abut}\nolimits_{f}A\right)  \cdot\det A.
\]

\end{theorem}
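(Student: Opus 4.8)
The key structural fact is that $B$ is itself of the form "Chio condensation applied to a modified matrix," so my strategy is to factor the claim through a chain of elementary column operations on $A$ followed by an application of Theorem~\ref{thm.chio}. Concretely, I would first observe that the $\left(i,j\right)$-th entry $a_{i,j}a_{f\left(i\right),n}-a_{i,n}a_{f\left(i\right),j}$ is the $2\times2$ minor $\det\begin{pmatrix} a_{i,j} & a_{i,n} \\ a_{f\left(i\right),j} & a_{f\left(i\right),n}\end{pmatrix}$. Thus each row $i$ of $B$ is built from rows $i$ and $f\left(i\right)$ of $A$ together with the $n$-th column of $A$. The plan is to express $\det B$ as a sum over permutations and reorganize it so that the $f\left(i\right)$ dependence is absorbed.

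The cleaner route, which I expect to work, is to reduce to the genuine Chio case ($f\left(i\right)=n$ for all $i<n$) by induction on the number of indices $i\in\left\{1,\ldots,n-1\right\}$ with $f\left(i\right)\neq n$. For part~(a), the plan is to exhibit a linear dependence among the rows of $B$ when $f$ is not $n$-potent: if $f$ is not $n$-potent, there is a nonempty subset $S\subseteq\left\{1,\ldots,n-1\right\}$ that is closed under $f$ and avoids $n$ (namely, take the set of indices never reaching $n$, or a cycle inside it). First I would show that the corresponding rows of $B$, indexed by $S$, are linearly dependent — the idea being that for $i\in S$ the row of $B$ only ``sees'' columns through $a_{f\left(i\right),j}$ with $f\left(i\right)\in S$, so these rows factor through a lower-dimensional space. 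This yields $\det B=0$.

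For part~(b), assuming $n$-potence, I would induct using Remark~\ref{rmk.n-potent.atleast1}: pick $g$ with $f\left(g\right)=n$, and analyze an index $i$ with $f\left(i\right)\neq n$. The main maneuver is a column-style operation: since $\operatorname*{abut}$ carries the factor $a_{f\left(i\right),n}$ precisely for those $i$ with $f\left(i\right)\neq n$, I would factor $a_{f\left(i\right),n}$ appropriately out of the determinant after a suitable row/column manipulation that replaces $f\left(i\right)$ by $n$, decrementing the count of ``bad'' indices while multiplying the abut-weight by the correct factor $a_{f\left(i\right),n}$ and preserving the power of $a_{n,n}$ through the $\left\vert f^{-1}\left(n\right)\right\vert-2$ bookkeeping. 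The base case, where every $i<n$ has $f\left(i\right)=n$, is exactly Theorem~\ref{thm.chio} (there $\left\vert f^{-1}\left(n\right)\right\vert=n$, so $\operatorname*{abut}\nolimits_{f}A=a_{n,n}^{n-2}$, matching Chio precisely).

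The step I expect to be the main obstacle is making the inductive reduction in~(b) rigorous at the level of determinants: the entries of $B$ are quadratic in the entries of $A$, so a single change in $f\left(i\right)$ alters an entire row of $B$ in a way that is not a pure row operation on $B$ itself. The hard part will be identifying the exact multilinear identity (likely a Cauchy--Binet or multilinearity argument on the rows of $B$, viewing each row as $a_{f\left(i\right),n}\cdot\left(\text{row }i\right)-a_{i,n}\cdot\left(\text{row }f\left(i\right)\right)$ in a $2\times\left(n-1\right)$ sense) that cleanly converts the substitution $f\left(i\right)\rightsquigarrow n$ into the predicted scalar factor. An alternative that sidesteps this, and which I would fall back on, is to expand $\det B$ directly via the permutation expansion, swap to a sum over functions, and recognize the surviving terms combinatorially as corresponding to the rooted tree of $f$ (using Remark~\ref{rmk.n-potent.trees}); the vanishing in~(a) then reflects the absence of a spanning tree when $f$ is not $n$-potent.
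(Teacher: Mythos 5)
You have put your finger on exactly the right difficulty in part (b), but you have not resolved it, and it is the heart of the proof, not a technicality. Your induction on the number of indices $i$ with $f\left(i\right)\neq n$ cannot be run from an arbitrary \textquotedblleft bad\textquotedblright\ index: writing $r_{k}$ for the $k$-th row of $A$ truncated to its first $n-1$ columns, row $i_{0}$ of $B$ is $a_{f\left(i_{0}\right),n}r_{i_{0}}-a_{i_{0},n}r_{f\left(i_{0}\right)}$, and converting it into the row $a_{n,n}r_{i_{0}}-a_{i_{0},n}r_{n}$ of the modified matrix $B'$ requires access to the vector $a_{n,n}r_{f\left(i_{0}\right)}-a_{f\left(i_{0}\right),n}r_{n}$, which is a row of $B$ only when $f\left(f\left(i_{0}\right)\right)=n$. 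So the induction does go through, but only with the forced choice of an $i_{0}$ satisfying $f\left(i_{0}\right)\neq n$ and $f\left(f\left(i_{0}\right)\right)=n$ (such an $i_{0}$ exists by following an orbit of the $n$-potent map $f$ into $n$); then $a_{n,n}\cdot\left(\text{row }i_{0}\text{ of }B\right)+a_{i_{0},n}\cdot\left(\text{row }f\left(i_{0}\right)\text{ of }B\right)=a_{f\left(i_{0}\right),n}\cdot\left(a_{n,n}r_{i_{0}}-a_{i_{0},n}r_{n}\right)$, giving $a_{n,n}\det B=a_{f\left(i_{0}\right),n}\det B'$, which matches $a_{n,n}\operatorname*{abut}\nolimits_{f}A=a_{f\left(i_{0}\right),n}\operatorname*{abut}\nolimits_{f'}A$. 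Even then you must cancel $a_{n,n}$, so you need the WLOG-invertibility (universal identities) reduction that you never invoke and that is unavoidable over an arbitrary commutative ring $\mathbb{K}$. Your fallback --- expanding $\det B$ over permutations and \textquotedblleft recognizing the surviving terms combinatorially\textquotedblright\ --- is not a proof; the paper explicitly lists such a combinatorial, division-free proof of this very theorem as an open question. Note also that your base case is Theorem~\ref{thm.chio}, which this paper obtains as a corollary of the theorem you are proving, so you would need to import an external proof of Chio to avoid circularity within the paper's own logic.

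Part (a) also has a gap as stated: from $S$ being $f$-closed and avoiding $n$ you conclude the rows of $B$ indexed by $S$ \textquotedblleft factor through a lower-dimensional space,\textquotedblright\ but those $\left\vert S\right\vert$ rows lie in $\operatorname{span}\left\{r_{k}:k\in S\right\}$, whose dimension is in general exactly $\left\vert S\right\vert$, so no dependence follows. What saves the argument is that the coefficient matrix $M=\left(a_{f\left(i\right),n}\delta_{i,k}-a_{i,n}\delta_{f\left(i\right),k}\right)_{i,k\in S}$ is singular: if $S$ is taken to be a cycle of $f$, then $\det M=\prod_{i\in S}a_{f\left(i\right),n}-\prod_{i\in S}a_{i,n}=0$ because $f$ permutes $S$; equivalently, $M$ annihilates the column vector $\left(a_{k,n}\right)_{k\in S}$. (This is essentially the kernel-vector trick the paper applies to its matrix $Z_{f}$ in Proposition~\ref{prop.Zf.n-pot}.) And passing from row dependence to $\det B=0$ is again a field argument, needing the same reduction. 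For comparison, the paper's route avoids both of your obstacles: after the WLOG it factors $a_{i,n}a_{f\left(i\right),n}$ out of each row of $B$, extends the result to an $n\times n$ matrix $D$ satisfying $D=Z_{f}E$ with $E=\left(a_{i,j}/a_{i,n}\right)_{1\leq i\leq n,\ 1\leq j\leq n}$, and reduces both parts at once to the statement that $\det\left(Z_{f}\right)$ is $1$ or $0$ according to whether $f$ is $n$-potent.
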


\begin{example}
For this example, let $n=3$ and $A=\left(
\begin{array}
[c]{ccc}%
a_{1,1} & a_{1,2} & a_{1,3}\\
a_{2,1} & a_{2,2} & a_{2,3}\\
a_{3,1} & a_{3,2} & a_{3,3}%
\end{array}
\right)  $.

If $f:\left\{  1,2,3\right\}  \rightarrow\left\{  1,2,3\right\}  $ is the map
sending $1,2,3$ to $3,1,3$, respectively, then the matrix $B$ defined in
Theorem \ref{thm.chio-gen} is $\left(
\begin{array}
[c]{cc}%
a_{1,1}a_{3,3}-a_{1,3}a_{3,1} & a_{1,2}a_{3,3}-a_{1,3}a_{3,2}\\
a_{2,1}a_{1,3}-a_{2,3}a_{1,1} & a_{2,2}a_{1,3}-a_{2,3}a_{1,2}%
\end{array}
\right)  $. Since this map $f$ is $n$-potent, Theorem \ref{thm.chio-gen}
\textbf{(b)} predicts that this matrix $B$ satisfies $\det B=\left(
\operatorname*{abut}\nolimits_{f}A\right)  \cdot\det A$. This is indeed easily
checked (indeed, we have $\operatorname*{abut}\nolimits_{f}A=a_{1,3}$ in this case).

On the other hand, if $f:\left\{  1,2,3\right\}  \rightarrow\left\{
1,2,3\right\}  $ is the map sending $1,2,3$ to $1,1,3$, respectively, then the
matrix $B$ defined in Theorem \ref{thm.chio-gen} is $\left(
\begin{array}
[c]{cc}%
a_{1,1}a_{1,3}-a_{1,3}a_{1,1} & a_{1,2}a_{1,3}-a_{1,3}a_{1,2}\\
a_{2,1}a_{1,3}-a_{2,3}a_{1,1} & a_{2,2}a_{1,3}-a_{2,3}a_{1,2}%
\end{array}
\right)  $. Since this map $f$ is not $n$-potent, Theorem \ref{thm.chio-gen}
\textbf{(a)} predicts that this matrix $B$ satisfies $\det B=0$. This, too, is
easily checked (and arguably obvious in this case).
\end{example}

Applying Theorem \ref{thm.chio-gen} \textbf{(b)} to $f\left(  i\right)  =n$
yields Theorem \ref{thm.chio}. (The map $f:\left\{  1,2,\ldots,n\right\}
\rightarrow\left\{  1,2,\ldots,n\right\}  $ defined by $f\left(  i\right)  =n$
is clearly $n$-potent, and satisfies $\operatorname*{abut}\nolimits_{f}%
A=a_{n,n}^{n-2}$.)

We defer the proof of Theorem \ref{thm.chio-gen} until later; first, let us
see how it can be generalized a bit further (not substantially, anymore) and
how this generalization also encompasses the matrix-tree theorem.

\subsection{The matrix-tree theorem}

\begin{definition}
For any two objects $i$ and $j$, we define an element $\delta_{i,j}%
\in\mathbb{K}$ by $\delta_{i,j}=%
\begin{cases}
1, & \text{if }i=j;\\
0, & \text{if }i\neq j
\end{cases}
$.
\end{definition}

Let us first state the matrix-tree theorem.

To be honest, there is no \textquotedblleft the matrix-tree
theorem\textquotedblright, but rather a network of \textquotedblleft
matrix-tree theorems\textquotedblright\ (some less, some more general), each
of which has a reasonable claim to this name. Here we shall prove the
following one:

\begin{theorem}
\label{thm.mtt}Let $n\geq1$ be an integer. Let $W:\left\{  1,2,\ldots
,n\right\}  \times\left\{  1,2,\ldots,n\right\}  \rightarrow\mathbb{K}$ be any
function. For every $i\in\left\{  1,2,\ldots,n\right\}  $, set%
\[
d^{+}\left(  i\right)  =\sum_{j=1}^{n}W\left(  i,j\right)  .
\]
Let $L$ be the matrix $\left(  \delta_{i,j}d^{+}\left(  i\right)  -W\left(
i,j\right)  \right)  _{1\leq i\leq n-1,\ 1\leq j\leq n-1}\in\mathbb{K}%
^{\left(  n-1\right)  \times\left(  n-1\right)  }$. Then,%
\begin{equation}
\det L=\sum_{\substack{f:\left\{  1,2,\ldots,n\right\}  \rightarrow\left\{
1,2,\ldots,n\right\}  ;\\f\left(  n\right)  =n;\\f\text{ is }n\text{-potent}%
}}\prod_{i=1}^{n-1}W\left(  i,f\left(  i\right)  \right)  .
\label{eq.thm.mtt.detL=}%
\end{equation}

\end{theorem}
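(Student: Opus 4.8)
The plan is to prove Theorem \ref{thm.mtt} directly from the multilinearity of the determinant, reducing it to a purely combinatorial statement about functional digraphs. (One could hope to route this through Theorem \ref{thm.chio-gen}, but its single-$f$ form does not by itself produce the \emph{sum} over $f$ on the right-hand side, so a self-contained expansion is cleaner.) The starting observation is that each entry of $L$ splits as a sum over a free index $k$: for all $i,j\in\left\{1,2,\ldots,n-1\right\}$,
\[
\delta_{i,j}d^{+}\left(i\right)-W\left(i,j\right)=\sum_{k=1}^{n}W\left(i,k\right)\left(\delta_{i,j}-\delta_{j,k}\right),
\]
since $\sum_{k=1}^{n}W\left(i,k\right)\delta_{i,j}=\delta_{i,j}d^{+}\left(i\right)$ and $\sum_{k=1}^{n}W\left(i,k\right)\delta_{j,k}=W\left(i,j\right)$.

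First I would expand $\det L$ using the linearity of the determinant in each of its $n-1$ rows. Choosing an index $k=g\left(i\right)\in\left\{1,\ldots,n\right\}$ for each row $i\in\left\{1,\ldots,n-1\right\}$ yields
\[
\det L=\sum_{g:\left\{1,\ldots,n-1\right\}\to\left\{1,\ldots,n\right\}}\left(\prod_{i=1}^{n-1}W\left(i,g\left(i\right)\right)\right)\det N_{g},\qquad N_{g}=\left(\delta_{i,j}-\delta_{j,g\left(i\right)}\right)_{1\leq i,j\leq n-1}.
\]
Each such $g$ extends uniquely to a map $f:\left\{1,\ldots,n\right\}\to\left\{1,\ldots,n\right\}$ with $f\left(n\right)=n$ (and then $\prod_{i=1}^{n-1}W\left(i,g\left(i\right)\right)=\prod_{i=1}^{n-1}W\left(i,f\left(i\right)\right)$), so comparing with the right-hand side of \eqref{eq.thm.mtt.detL=} the theorem reduces to the claim that $\det N_{g}$ equals $1$ when $f$ is $n$-potent and $0$ otherwise.

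The heart of the argument --- and the step I expect to require the most care --- is evaluating $\det N_{g}$. Writing $N_{g}=I-P_{g}$, where $P_{g}$ is the $0$-$1$ matrix with $\left(P_{g}\right)_{i,j}=\delta_{j,g\left(i\right)}$ (the functional-graph matrix of $g$ restricted to $\left\{1,\ldots,n-1\right\}$), I would expand $\det N_{g}$ over permutations $\sigma\in S_{n-1}$. A term survives only if, for every $i$, either $\sigma\left(i\right)=i$ with $g\left(i\right)\neq i$ (contributing a factor $+1$) or $\sigma\left(i\right)=g\left(i\right)\neq i$ (contributing a factor $-1$); hence the support $T=\left\{i:\sigma\left(i\right)\neq i\right\}$ must be a union of cycles of the functional digraph of $g$. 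A short sign computation (an $\ell$-cycle contributes $\left(-1\right)^{\ell-1}$ to $\operatorname{sgn}\sigma$ and $\left(-1\right)^{\ell}$ from its factors, for a net $-1$) shows each included cycle contributes a factor $-1$, so that
\[
\det N_{g}=\sum_{\mathcal{C}}\left(-1\right)^{\left\vert\mathcal{C}\right\vert}=\left(1-1\right)^{c},
\]
the sum ranging over all sets $\mathcal{C}$ of cycles of $g$ and $c$ denoting the total number of such cycles. Here the crucial combinatorial fact is that in a functional digraph the cycles are automatically vertex-disjoint, so every subset of the cycle set is an admissible $\mathcal{C}$ and the signed sum collapses binomially.

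Finally, $c=0$ precisely when the orbit of every $i$ under iteration of $f$ eventually reaches the fixed point $n$ --- that is, precisely when $f$ is $n$-potent, since an orbit trapped in $\left\{1,\ldots,n-1\right\}$ must close up into a cycle there, while conversely any cycle in $\left\{1,\ldots,n-1\right\}$ produces an element never reaching $n$. Thus $\det N_{g}=\left(1-1\right)^{c}$ equals $1$ when $f$ is $n$-potent (where $c=0$) and $0$ otherwise, which is exactly what the reduction above requires, completing the proof.
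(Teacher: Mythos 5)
Your proposal is correct in substance, but it takes a genuinely different route from the paper. The paper derives Theorem \ref{thm.mtt} from Theorem \ref{thm.supergen}: it introduces the auxiliary $0$-$1$ matrix $A$ with $a_{i,j}=\delta_{i,j}+\delta_{j,n}\left(1-\delta_{i,n}\right)$, checks that $L$ is the matrix $G$ of Theorem \ref{thm.supergen} for $B=\left(W\left(i,j\right)\right)_{1\leq i\leq n,\ 1\leq j\leq n}$, and then runs through the chain Theorem \ref{thm.supergen} $\leftarrow$ Theorem \ref{thm.chio-gen}, whose proof needs the WLOG-invertibility step (method of universal identities) and division. Your first step is shared with the paper: your row expansion of $\det L$ is exactly Lemma \ref{lem.supergen.sum1} (equivalently Lemma \ref{lem.supergen.sum2}) specialized to $b_{i,k}=W\left(i,k\right)$ and $d_{i,j,k}=\delta_{i,j}-\delta_{j,k}$. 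But you then replace the entire detour through Theorem \ref{thm.chio-gen} by a direct evaluation of $\det N_{g}$. In fact your $N_{g}$ is the top-left $\left(n-1\right)\times\left(n-1\right)$ block of the paper's matrix $Z_{f}$ (Definition \ref{def.Zf}), and $\det N_{g}=\det\left(Z_{f}\right)$ (expand $Z_{f}$ along its last row), so your cycle computation is an alternative proof of Proposition \ref{prop.Zf.n-pot}: where the paper treats the $n$-potent case via Lemma \ref{lem.n-potent.sigma} and the non-potent case via the kernel vector of Proposition \ref{prop.n-potent.Zv}, you treat both uniformly by the signed cancellation $\left(1-1\right)^{c}$ over subsets of cycles. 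What you lose is generality: your argument proves only the matrix-tree theorem, not Theorems \ref{thm.chio-gen} and \ref{thm.supergen}, where $A$ is arbitrary and the factors $\operatorname*{abut}\nolimits_{f}A$ and $\det A$ are nontrivial. What you gain is that your proof is entirely division-free and needs no universal-identities argument -- which is worth noting, since the paper explicitly asks in its closing questions whether such a proof exists (your argument answers this for the matrix-tree special case, though not for Theorem \ref{thm.chio-gen} itself).

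One point needs patching: the claim that \emph{every} subset of the cycle set is an admissible $\mathcal{C}$ fails when $g$ has a fixed point $i_{0}\in\left\{1,2,\ldots,n-1\right\}$ (a $1$-cycle). In that case \emph{no} permutation yields a surviving term at all: if $\sigma\left(i_{0}\right)=i_{0}$, the factor is $\delta_{i_{0},i_{0}}-\delta_{i_{0},g\left(i_{0}\right)}=1-1=0$, and if $\sigma\left(i_{0}\right)\neq i_{0}$, the factor is $0-\delta_{\sigma\left(i_{0}\right),g\left(i_{0}\right)}=0$; so the bijection between subsets of cycles and surviving terms breaks down. The conclusion still holds -- row $i_{0}$ of $N_{g}$ is identically zero, so $\det N_{g}=0$, agreeing with $\left(1-1\right)^{c}=0$ -- but a clean write-up should dispose of this case first (zero row) and then run the cycle-subset bijection only for fixed-point-free $g$, where the condition \textquotedblleft$\sigma\left(i\right)=i$ with $g\left(i\right)\neq i$\textquotedblright\ is automatic off the support and the correspondence is genuinely one-to-one.
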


Since our notation differs from that in most other sources on the matrix-tree
theorem, let us explain the equivalence between our Theorem \ref{thm.mtt} and
one of its better-known avatars: The version of the matrix-tree theorem stated
in \cite[Section 4]{Zeilbe} involves some \textquotedblleft
weights\textquotedblright\ $a_{k,m}$, a determinant of an $\left(  n-1\right)
\times\left(  n-1\right)  $-matrix, and a sum over a set $\mathcal{T}%
=\mathcal{T}\left(  n\right)  $. These correspond (respectively) to the values
$W\left(  k,m\right)  $, the determinant $\det L$, and the sum over all
$n$-potent maps $f$ in our Theorem \ref{thm.mtt}. In fact, the only nontrivial
part of this correspondence is the bijection between the trees in
$\mathcal{T}$ and the $n$-potent maps $f$ over which the sum in
(\ref{eq.thm.mtt.detL=}) ranges. This bijection is precisely the one
introduced in Remark \ref{rmk.n-potent.trees}.\footnote{A slightly different
version of the matrix-tree theorem appears in \cite[Theorem 1]{Verstraete}
(and various other places); it involves a function $W$, a number $v\in\left\{
1,2,\ldots,n\right\}  $, a matrix $L_{v}$, a set $\mathcal{T}_{v}$ and a sum
$\tau\left(  W,v\right)  $. Our Theorem \ref{thm.mtt} is equivalent to the
case of \cite[Theorem 1]{Verstraete} for $v=n$; but this case is easily seen
to be equivalent to the general case of \cite[Theorem 1]{Verstraete} (since
the elements of $\left\{  1,2,\ldots,n\right\}  $ can be permuted at will).
Our matrix $L$ is the $L_{n}$ of \cite[Theorem 1]{Verstraete}. Furthermore,
our sum over all $n$-potent maps $f$ corresponds to the sum $\tau\left(
W,n\right)  $ in \cite{Verstraete}, which is a sum over all $n$-arborescences
on $\left\{  1,2,\ldots,n\right\}  $; the correspondence is again due to
Remark \ref{rmk.n-potent.trees}.}

It might seem weird to call Theorem \ref{thm.mtt} the \textquotedblleft
matrix-tree theorem\textquotedblright\ if the word \textquotedblleft
tree\textquotedblright\ never occurs inside it. However, as we have already
noticed in Remark \ref{rmk.n-potent.trees}, the trees on the set $\left\{
1,2,\ldots,n\right\}  $ are in bijection with the $n$-potent maps $\left\{
1,2,\ldots,n\right\}  \rightarrow\left\{  1,2,\ldots,n\right\}  $, and
therefore the sum on the right hand side of (\ref{eq.thm.mtt.detL=}) can be
viewed as a sum over all these trees. Moreover, the function $W$ can be viewed
as an $n\times n$-matrix; when this matrix is specialized to the adjacency
matrix of a directed graph, the sum on the right hand side of
(\ref{eq.thm.mtt.detL=}) becomes the number of directed spanning trees of this
directed graph directed towards the root $n$.

\subsection{Generalization, step 2}

Now, as promised, we will generalize Theorem \ref{thm.chio-gen} a step
further. While the result will not be significantly stronger (we will actually
derive it from Theorem \ref{thm.chio-gen} quite easily), it will lead to a
short proof of Theorem \ref{thm.mtt}:

\Needspace{8cm}

\begin{theorem}
\label{thm.supergen}Let $n\geq2$ be an integer. Let $A=\left(  a_{i,j}\right)
_{1\leq i\leq n,\ 1\leq j\leq n}\in\mathbb{K}^{n\times n}$ and $B=\left(
b_{i,j}\right)  _{1\leq i\leq n,\ 1\leq j\leq n}\in\mathbb{K}^{n\times n}$ be
$n\times n$-matrices. Write the $n\times n$-matrix $BA$ in the form
$BA=\left(  c_{i,j}\right)  _{1\leq i\leq n,\ 1\leq j\leq n}$.

Let $G$ be the $\left(  n-1\right)  \times\left(  n-1\right)  $-matrix%
\[
\left(  a_{i,j}c_{i,n}-a_{i,n}c_{i,j}\right)  _{1\leq i\leq n-1,\ 1\leq j\leq
n-1}\in\mathbb{K}^{\left(  n-1\right)  \times\left(  n-1\right)  }.
\]
Then,%
\[
\det G=\left(  \sum_{\substack{f:\left\{  1,2,\ldots,n\right\}  \rightarrow
\left\{  1,2,\ldots,n\right\}  ;\\f\left(  n\right)  =n;\\f\text{ is
}n\text{-potent}}}\left(  \operatorname*{weight}\nolimits_{f}B\right)  \left(
\operatorname*{abut}\nolimits_{f}A\right)  \right)  \cdot\det A.
\]

\end{theorem}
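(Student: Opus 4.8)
The plan is to reduce Theorem~\ref{thm.supergen} to Theorem~\ref{thm.chio-gen} by expanding the matrix $G$ into a sum indexed by maps $f$. First I would compute the entries of $G$ explicitly. Writing $c_{i,j}=\sum_{p=1}^{n}b_{i,p}a_{p,j}$, the $\left(i,j\right)$-th entry of $G$ is
\[
a_{i,j}c_{i,n}-a_{i,n}c_{i,j}=\sum_{p=1}^{n}b_{i,p}\left(a_{i,j}a_{p,n}-a_{i,n}a_{p,j}\right).
\]
Thus each row $i$ of $G$ is a $\mathbb{K}$-linear combination, with coefficients $b_{i,p}$, of the row vectors $\left(a_{i,j}a_{p,n}-a_{i,n}a_{p,j}\right)_{1\leq j\leq n-1}$ indexed by $p\in\left\{1,2,\ldots,n\right\}$. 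The key observation is that this is exactly the row that the matrix $B$ in Theorem~\ref{thm.chio-gen} would have in row $i$ if the map sent $i$ to $p$; that is, setting $p=f\left(i\right)$ recovers the entry $a_{i,j}a_{f\left(i\right),n}-a_{i,n}a_{f\left(i\right),j}$.

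Next I would invoke multilinearity of the determinant in its rows. Since each of the $n-1$ rows of $G$ is a sum over $p\in\left\{1,2,\ldots,n\right\}$, expanding $\det G$ by multilinearity produces a sum over all choices of $p$ for each row, i.e. over all maps $f:\left\{1,2,\ldots,n-1\right\}\rightarrow\left\{1,2,\ldots,n\right\}$, which (since $f\left(n\right)=n$ is forced anyway) we may identify with maps $f:\left\{1,2,\ldots,n\right\}\rightarrow\left\{1,2,\ldots,n\right\}$ satisfying $f\left(n\right)=n$. Concretely,
\[
\det G=\sum_{\substack{f:\left\{1,2,\ldots,n\right\}\rightarrow\left\{1,2,\ldots,n\right\};\\f\left(n\right)=n}}\left(\prod_{i=1}^{n-1}b_{i,f\left(i\right)}\right)\det\left(\left(a_{i,j}a_{f\left(i\right),n}-a_{i,n}a_{f\left(i\right),j}\right)_{1\leq i\leq n-1,\ 1\leq j\leq n-1}\right).
\]
Here $\prod_{i=1}^{n-1}b_{i,f\left(i\right)}$ is precisely $\operatorname*{weight}\nolimits_{f}B$, and the determinant on the right is exactly $\det B$ from Theorem~\ref{thm.chio-gen} for this map $f$ applied to the matrix $A$.

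Finally I would apply Theorem~\ref{thm.chio-gen} term by term. For each $f$ in the sum, part \textbf{(a)} gives that the inner determinant vanishes whenever $f$ is not $n$-potent, so those terms drop out; part \textbf{(b)} evaluates the inner determinant as $\left(\operatorname*{abut}\nolimits_{f}A\right)\cdot\det A$ whenever $f$ is $n$-potent. Substituting, factoring out $\det A$, and restricting the sum to $n$-potent maps yields exactly the claimed formula. The main obstacle, though it is more bookkeeping than genuine difficulty, is handling the multilinear expansion cleanly: I expect the cleanest route is to define for each $p$ the row vector $r_{i,p}=\left(a_{i,j}a_{p,n}-a_{i,n}a_{p,j}\right)_{1\leq j\leq n-1}$, observe that row $i$ of $G$ equals $\sum_{p}b_{i,p}\,r_{i,p}$, and then appeal to the standard fact that the determinant of a matrix whose $i$-th row is a sum $\sum_{p}b_{i,p}\,r_{i,p}$ expands as $\sum_{f}\left(\prod_i b_{i,f\left(i\right)}\right)\det\left(r_{i,f\left(i\right)}\right)_i$, being careful that the index set for each row is the full $\left\{1,2,\ldots,n\right\}$ and that the resulting family of functions is correctly identified with the maps fixing $n$.
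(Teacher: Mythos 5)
Your proposal is correct and follows essentially the same route as the paper: expand the entries of $G$ as $\sum_{p}b_{i,p}\left(a_{i,j}a_{p,n}-a_{i,n}a_{p,j}\right)$, use multilinearity of the determinant in the rows (the paper packages this as Lemmas \ref{lem.supergen.sum1} and \ref{lem.supergen.sum2}, including the same identification of maps $\left\{1,2,\ldots,n-1\right\}\rightarrow\left\{1,2,\ldots,n\right\}$ with maps fixing $n$), and then apply Theorem \ref{thm.chio-gen} term by term, with part \textbf{(a)} killing the non-$n$-potent terms and part \textbf{(b)} evaluating the rest.
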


To obtain Theorem \ref{thm.chio-gen} from Theorem \ref{thm.supergen}, we have
to define $B$ by $B=\left(  \delta_{j,f\left(  i\right)  }\right)  _{1\leq
i\leq n,\ 1\leq j\leq n}$. Below we shall show how to obtain the matrix-tree
theorem from Theorem \ref{thm.supergen}.

\begin{example}
Let us see what Theorem \ref{thm.supergen} says for $n=3$. There are three
$n$-potent maps $f:\left\{  1,2,3\right\}  \rightarrow\left\{  1,2,3\right\}
$:

\begin{itemize}
\item one map $f_{33}$ which sends both $1$ and $2$ to $3$;

\item one map $f_{23}$ which sends $1$ to $2$ and $2$ to $3$;

\item one map $f_{31}$ which sends $2$ to $1$ and $1$ to $3$.
\end{itemize}

The definition of the $c_{i,j}$ as the entries of $BA$ shows that
$c_{i,j}=b_{i,1}a_{1,j}+b_{i,2}a_{2,j}+b_{i,3}a_{3,j}$ for all $i$ and $j$. We
have%
\[
G=\left(
\begin{array}
[c]{cc}%
a_{1,1}c_{1,3}-c_{1,1}a_{1,3} & a_{1,2}c_{1,3}-c_{1,2}a_{1,3}\\
a_{2,1}c_{2,3}-c_{2,1}a_{2,3} & a_{2,2}c_{2,3}-c_{2,2}a_{2,3}%
\end{array}
\right)  .
\]
Theorem \ref{thm.supergen} says that%
\begin{align*}
\det G  &  =\left(  \left(  \operatorname*{weight}\nolimits_{f_{33}}B\right)
\left(  \operatorname*{abut}\nolimits_{f_{33}}A\right)  +\left(
\operatorname*{weight}\nolimits_{f_{23}}B\right)  \left(  \operatorname*{abut}%
\nolimits_{f_{23}}A\right)  \right. \\
&  \ \ \ \ \ \ \ \ \ \ \left.  +\left(  \operatorname*{weight}%
\nolimits_{f_{31}}B\right)  \left(  \operatorname*{abut}\nolimits_{f_{31}%
}A\right)  \right)  \cdot\det A\\
&  =\left(  b_{1,3}b_{2,3}a_{3,3}+b_{1,2}b_{2,3}a_{2,3}+b_{1,3}b_{2,1}%
a_{1,3}\right)  \cdot\det A.
\end{align*}

\end{example}

\section{The proofs}

\subsection{Deriving Theorem \ref{thm.supergen} from Theorem
\ref{thm.chio-gen}}

Let us see how Theorem \ref{thm.supergen} can be proven using Theorem
\ref{thm.chio-gen} (which we have not proven yet). We shall need two lemmas:

\begin{lemma}
\label{lem.supergen.sum1}Let $n\in\mathbb{N}$ and $m\in\mathbb{N}$. Let
$b_{i,k}$ be an element of $\mathbb{K}$ for every $i\in\left\{  1,2,\ldots
,m\right\}  $ and every $k\in\left\{  1,2,\ldots,n\right\}  $. Let $d_{i,j,k}$
be an element of $\mathbb{K}$ for every $i\in\left\{  1,2,\ldots,m\right\}  $,
$j\in\left\{  1,2,\ldots,m\right\}  $ and $k\in\left\{  1,2,\ldots,n\right\}
$. Let $G$ be the $m\times m$-matrix $\left(  \sum_{k=1}^{n}b_{i,k}%
d_{i,j,k}\right)  _{1\leq i\leq m,\ 1\leq j\leq m}$. Then,%
\[
\det G=\sum_{f:\left\{  1,2,\ldots,m\right\}  \rightarrow\left\{
1,2,\ldots,n\right\}  }\left(  \prod_{i=1}^{m}b_{i,f\left(  i\right)
}\right)  \det\left(  \left(  d_{i,j,f\left(  i\right)  }\right)  _{1\leq
i\leq m,\ 1\leq j\leq m}\right)  .
\]

\end{lemma}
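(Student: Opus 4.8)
The plan is to prove Lemma \ref{lem.supergen.sum1} by multilinear expansion of the determinant along its rows. The matrix $G$ has $\left(i,j\right)$-th entry $\sum_{k=1}^{n}b_{i,k}d_{i,j,k}$, which is a sum of $n$ terms. The key observation is that the determinant $\det G$ is multilinear in the rows of $G$, and each row of $G$ is a sum of $n$ vectors indexed by $k\in\left\{1,2,\ldots,n\right\}$: the $i$-th row equals $\sum_{k=1}^{n}b_{i,k}\cdot\left(d_{i,1,k},d_{i,2,k},\ldots,d_{i,m,k}\right)$.

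First I would invoke the standard multilinearity of the determinant in its rows to expand $\det G$ as a sum over all ways of choosing, independently for each row $i\in\left\{1,2,\ldots,m\right\}$, an index $k\in\left\{1,2,\ldots,n\right\}$. Such a choice is precisely a map $f:\left\{1,2,\ldots,m\right\}\rightarrow\left\{1,2,\ldots,n\right\}$, where $f\left(i\right)$ is the index selected for row $i$. Pulling the scalar $b_{i,f\left(i\right)}$ out of row $i$ contributes the factor $\prod_{i=1}^{m}b_{i,f\left(i\right)}$, and the remaining determinant is that of the matrix whose $i$-th row is $\left(d_{i,1,f\left(i\right)},d_{i,2,f\left(i\right)},\ldots,d_{i,m,f\left(i\right)}\right)$, i.e.\ $\det\left(\left(d_{i,j,f\left(i\right)}\right)_{1\leq i\leq m,\ 1\leq j\leq m}\right)$. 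Summing over all $f$ yields exactly the claimed right-hand side.

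The main obstacle, such as it is, is purely one of bookkeeping: making the multilinear expansion rigorous without hand-waving. The cleanest route is to formalize the row-multilinearity of $\det$ as the statement that for row vectors $r_1,\ldots,r_m$ and scalars, $\det$ distributes over sums in each argument, then apply it $m$ times (or invoke a one-shot version: the determinant of a matrix whose $i$-th row is a sum $\sum_{k}b_{i,k}v_{i,k}$ equals $\sum_{f}\left(\prod_i b_{i,f(i)}\right)\det\left(\text{rows }v_{i,f(i)}\right)$). I would either cite this as a well-known property of determinants (consistent with the paper's stated intention to use only standard determinant facts) or, if a self-contained argument is preferred, prove it by a straightforward induction on $m$, peeling off one row at a time. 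No genuine difficulty arises, and in particular no $n$-potency or graph-theoretic structure is needed here; those enter only when this lemma is later applied with a specific choice of the $d_{i,j,k}$.
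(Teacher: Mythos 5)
Your proposal is correct, but it takes a somewhat different route from the paper. The paper proves the lemma by a single direct computation from the Leibniz formula: it writes $\det G=\sum_{\sigma\in S_{m}}\left(  -1\right)  ^{\sigma}\prod_{i=1}^{m}\left(  \sum_{k=1}^{n}b_{i,k}d_{i,\sigma\left(  i\right)  ,k}\right)  $, expands the product of sums by the product rule into a sum over maps $f:\left\{  1,2,\ldots,m\right\}  \rightarrow\left\{  1,2,\ldots,n\right\}  $, interchanges the (finite) sums over $\sigma$ and $f$, and then recognizes the inner sum $\sum_{\sigma\in S_{m}}\left(  -1\right)  ^{\sigma}\prod_{i=1}^{m}d_{i,\sigma\left(  i\right)  ,f\left(  i\right)  }$ as $\det\left(  \left(  d_{i,j,f\left(  i\right)  }\right)  _{1\leq i\leq m,\ 1\leq j\leq m}\right)  $. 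You instead expand row by row, invoking multilinearity of $\det$ in the rows and pulling out the scalars $b_{i,f\left(  i\right)  }$, with an induction on $m$ as the fallback for rigor. Both are legitimate: the paper's computation is fully self-contained given only the permutation-sum definition of the determinant (no auxiliary lemma, no induction), while yours is more modular and conceptual. One caution: the \textquotedblleft one-shot version\textquotedblright\ of multilinearity you mention is verbatim the statement of the lemma itself, so citing it would be circular; the real content of your proof must live either in the row-by-row induction (which does work, using linearity of $\det$ in a single row with the other rows fixed) or in a computation like the paper's. Note that the paper itself remarks that the lemma \textquotedblleft is merely a scary way to state the multilinearity of the determinant as a function of its rows,\textquotedblright\ so your reading of its content agrees with the authors'; the difference lies only in how that multilinearity is actually established.
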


Lemma \ref{lem.supergen.sum1} is merely a scary way to state the
multilinearity of the determinant as a function of its rows. See the Appendix
for a proof.

Let us specialize Lemma \ref{lem.supergen.sum1} in a way that is closer to our goal:

\begin{lemma}
\label{lem.supergen.sum2}Let $n$ be a positive integer. Let $b_{i,k}$ be an
element of $\mathbb{K}$ for every $i\in\left\{  1,2,\ldots,n-1\right\}  $ and
every $k\in\left\{  1,2,\ldots,n\right\}  $. Let $d_{i,j,k}$ be an element of
$\mathbb{K}$ for every $i\in\left\{  1,2,\ldots,n-1\right\}  $, $j\in\left\{
1,2,\ldots,n-1\right\}  $ and $k\in\left\{  1,2,\ldots,n\right\}  $. Let $G$
be the $\left(  n-1\right)  \times\left(  n-1\right)  $-matrix $\left(
\sum_{k=1}^{n}b_{i,k}d_{i,j,k}\right)  _{1\leq i\leq n-1,\ 1\leq j\leq n-1}$.
Then,%
\[
\det G=\sum_{\substack{f:\left\{  1,2,\ldots,n\right\}  \rightarrow\left\{
1,2,\ldots,n\right\}  ;\\f\left(  n\right)  =n}}\left(  \prod_{i=1}%
^{n-1}b_{i,f\left(  i\right)  }\right)  \det\left(  \left(  d_{i,j,f\left(
i\right)  }\right)  _{1\leq i\leq n-1,\ 1\leq j\leq n-1}\right)  .
\]

\end{lemma}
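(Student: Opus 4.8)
The plan is to derive Lemma \ref{lem.supergen.sum2} as a direct specialization of Lemma \ref{lem.supergen.sum1}. The key observation is that the sum over \emph{all} maps $f:\left\{1,2,\ldots,m\right\}\rightarrow\left\{1,2,\ldots,n\right\}$ in Lemma \ref{lem.supergen.sum1} must be cut down to a sum over only those maps sending $n$ to $n$, and this is exactly what restricting the matrix from size $n$ to size $n-1$ accomplishes. First I would set $m=n-1$ in Lemma \ref{lem.supergen.sum1}. This immediately gives
\[
\det G=\sum_{g:\left\{1,2,\ldots,n-1\right\}\rightarrow\left\{1,2,\ldots,n\right\}}\left(\prod_{i=1}^{n-1}b_{i,g\left(i\right)}\right)\det\left(\left(d_{i,j,g\left(i\right)}\right)_{1\leq i\leq n-1,\ 1\leq j\leq n-1}\right),
\]
where I have renamed the summation variable to $g$ to distinguish it from the maps $f$ in the target statement.

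The heart of the argument is a bijection between the two index sets of summation. The maps $g:\left\{1,2,\ldots,n-1\right\}\rightarrow\left\{1,2,\ldots,n\right\}$ are in natural 1-to-1 correspondence with the maps $f:\left\{1,2,\ldots,n\right\}\rightarrow\left\{1,2,\ldots,n\right\}$ satisfying $f\left(n\right)=n$: namely, each such $f$ restricts to a map $g=f\mid_{\left\{1,2,\ldots,n-1\right\}}$, and conversely each $g$ extends uniquely to such an $f$ by setting $f\left(n\right)=n$. Under this correspondence, $g\left(i\right)=f\left(i\right)$ for all $i\in\left\{1,2,\ldots,n-1\right\}$, so the summand is literally unchanged: the product $\prod_{i=1}^{n-1}b_{i,g\left(i\right)}$ becomes $\prod_{i=1}^{n-1}b_{i,f\left(i\right)}$, and the determinant $\det\left(\left(d_{i,j,g\left(i\right)}\right)_{1\leq i\leq n-1,\ 1\leq j\leq n-1}\right)$ becomes $\det\left(\left(d_{i,j,f\left(i\right)}\right)_{1\leq i\leq n-1,\ 1\leq j\leq n-1}\right)$. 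Reindexing the sum along this bijection therefore yields exactly the right-hand side claimed in Lemma \ref{lem.supergen.sum2}.

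I do not expect any genuine obstacle here: the entire content is the reindexing of a finite sum along an obvious bijection, and Lemma \ref{lem.supergen.sum1} supplies all the analytic substance (the multilinearity of the determinant). The only point requiring a word of care is the bookkeeping around the index sets: one must note that the hypotheses of Lemma \ref{lem.supergen.sum2} provide $b_{i,k}$ and $d_{i,j,k}$ precisely for $i,j\in\left\{1,2,\ldots,n-1\right\}$ and $k\in\left\{1,2,\ldots,n\right\}$, which is exactly what the $m=n-1$ specialization of Lemma \ref{lem.supergen.sum1} demands, so the two statements match up without any need to invent or discard data. Thus the proof amounts to invoking Lemma \ref{lem.supergen.sum1}, substituting $m=n-1$, and observing the restriction/extension bijection just described.
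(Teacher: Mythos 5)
Your proposal is correct and is essentially identical to the paper's own proof: the paper also applies Lemma \ref{lem.supergen.sum1} with $m=n-1$ and then reindexes the sum via the restriction/extension bijection between maps $\left\{1,2,\ldots,n-1\right\}\rightarrow\left\{1,2,\ldots,n\right\}$ and maps $f:\left\{1,2,\ldots,n\right\}\rightarrow\left\{1,2,\ldots,n\right\}$ with $f\left(n\right)=n$. Nothing is missing.
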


\begin{proof}
[Proof of Lemma \ref{lem.supergen.sum2}.]Lemma \ref{lem.supergen.sum1}
(applied to $m=n-1$) shows that%
\[
\det G=\sum_{f:\left\{  1,2,\ldots,n-1\right\}  \rightarrow\left\{
1,2,\ldots,n\right\}  }\left(  \prod_{i=1}^{n-1}b_{i,f\left(  i\right)
}\right)  \det\left(  \left(  d_{i,j,f\left(  i\right)  }\right)  _{1\leq
i\leq n-1,\ 1\leq j\leq n-1}\right)  .
\]
The only difference between this formula and the claim of Lemma
\ref{lem.supergen.sum2} is that the sum here is over all $f:\left\{
1,2,\ldots,n-1\right\}  \rightarrow\left\{  1,2,\ldots,n\right\}  $, whereas
the sum in the claim of Lemma \ref{lem.supergen.sum2} is over all $f:\left\{
1,2,\ldots,n\right\}  \rightarrow\left\{  1,2,\ldots,n\right\}  $ satisfying
$f\left(  n\right)  =n$. But this is not much of a difference: Each map
$\left\{  1,2,\ldots,n-1\right\}  \rightarrow\left\{  1,2,\ldots,n\right\}  $
is a restriction (to $\left\{  1,2,\ldots,n-1\right\}  $) of a unique map
$f:\left\{  1,2,\ldots,n\right\}  \rightarrow\left\{  1,2,\ldots,n\right\}  $
satisfying $f\left(  n\right)  =n$, and therefore the two sums are equal.
\end{proof}

\begin{proof}
[Proof of Theorem \ref{thm.supergen}.]For every $i\in\left\{  1,2,\ldots
,n-1\right\}  $, $j\in\left\{  1,2,\ldots,n-1\right\}  $ and $k\in\left\{
1,2,\ldots,n\right\}  $, define an element $d_{i,j,k}$ of $\mathbb{K}$ by%
\begin{equation}
d_{i,j,k}=a_{i,j}a_{k,n}-a_{i,n}a_{k,j}. \label{pf.thm.supergen.dijk}%
\end{equation}
For every $f:\left\{  1,2,\ldots,n\right\}  \rightarrow\left\{  1,2,\ldots
,n\right\}  $ satisfying $f\left(  n\right)  =n$, we have%
\begin{align}
&  \det\left(  \left(  \underbrace{d_{i,j,f\left(  i\right)  }}%
_{\substack{=a_{i,j}a_{f\left(  i\right)  ,n}-a_{i,n}a_{f\left(  i\right)
,j}\\\text{(by (\ref{pf.thm.supergen.dijk}))}}}\right)  _{1\leq i\leq
n-1,\ 1\leq j\leq n-1}\right) \nonumber\\
&  =\det\left(  \left(  a_{i,j}a_{f\left(  i\right)  ,n}-a_{i,n}a_{f\left(
i\right)  ,j}\right)  _{1\leq i\leq n-1,\ 1\leq j\leq n-1}\right) \nonumber\\
&  =%
\begin{cases}
0, & \text{if }f\text{ is not }n\text{-potent};\\
\left(  \operatorname*{abut}\nolimits_{f}A\right)  \cdot\det A, & \text{if
}f\text{ is }n\text{-potent}%
\end{cases}
\label{pf.thm.supergen.det1}%
\end{align}
(by Theorem \ref{thm.chio-gen}, applied to the matrix $\left(  a_{i,j}%
a_{f\left(  i\right)  ,n}-a_{i,n}a_{f\left(  i\right)  ,j}\right)  _{1\leq
i\leq n-1,\ 1\leq j\leq n-1}$ instead of $B$).

We have
\[
\left(  c_{i,j}\right)  _{1\leq i\leq n,\ 1\leq j\leq n}=BA=\left(  \sum
_{k=1}^{n}b_{i,k}a_{k,j}\right)  _{1\leq i\leq n,\ 1\leq j\leq n}%
\]
(by the definition of the product of two matrices). Thus,%
\begin{equation}
c_{i,j}=\sum_{k=1}^{n}b_{i,k}a_{k,j}\ \ \ \ \ \ \ \ \ \ \text{for every
}\left(  i,j\right)  \in\left\{  1,2,\ldots,n\right\}  ^{2}.
\label{pf.thm.supergen.cij}%
\end{equation}
Now, for every $\left(  i,j\right)  \in\left\{  1,2,\ldots,n-1\right\}  ^{2}$,
we have%
\begin{align*}
&  a_{i,j}\underbrace{c_{i,n}}_{\substack{=\sum_{k=1}^{n}b_{i,k}%
a_{k,n}\\\text{(by (\ref{pf.thm.supergen.cij}), applied to }n\\\text{instead
of }j\text{)}}}-a_{i,n}\underbrace{c_{i,j}}_{\substack{=\sum_{k=1}^{n}%
b_{i,k}a_{k,j}\\\text{(by (\ref{pf.thm.supergen.cij}))}}}\\
&  =a_{i,j}\sum_{k=1}^{n}b_{i,k}a_{k,n}-a_{i,n}\sum_{k=1}^{n}b_{i,k}%
a_{k,j}=\sum_{k=1}^{n}b_{i,k}\underbrace{\left(  a_{i,j}a_{k,n}-a_{i,n}%
a_{k,j}\right)  }_{\substack{=d_{i,j,k}\\\text{(by (\ref{pf.thm.supergen.dijk}%
))}}}=\sum_{k=1}^{n}b_{i,k}d_{i,j,k}.
\end{align*}
Hence,
\[
G=\left(  \underbrace{a_{i,j}c_{i,n}-a_{i,n}c_{i,j}}_{=\sum_{k=1}^{n}%
b_{i,k}d_{i,j,k}}\right)  _{1\leq i\leq n-1,\ 1\leq j\leq n-1}=\left(
\sum_{k=1}^{n}b_{i,k}d_{i,j,k}\right)  _{1\leq i\leq n-1,\ 1\leq j\leq n-1}.
\]
Hence, Lemma \ref{lem.supergen.sum2} yields%
\begin{align*}
\det G  &  =\sum_{\substack{f:\left\{  1,2,\ldots,n\right\}  \rightarrow
\left\{  1,2,\ldots,n\right\}  ;\\f\left(  n\right)  =n}}\underbrace{\left(
\prod_{i=1}^{n-1}b_{i,f\left(  i\right)  }\right)  }%
_{\substack{=\operatorname*{weight}\nolimits_{f}B\\\text{(by the
definition}\\\text{of }\operatorname*{weight}\nolimits_{f}B\text{)}%
}}\underbrace{\det\left(  \left(  d_{i,j,f\left(  i\right)  }\right)  _{1\leq
i\leq n-1,\ 1\leq j\leq n-1}\right)  }_{=%
\begin{cases}
0, & \text{if }f\text{ is not }n\text{-potent};\\
\left(  \operatorname*{abut}\nolimits_{f}A\right)  \cdot\det A, & \text{if
}f\text{ is }n\text{-potent}%
\end{cases}
}\\
&  =\sum_{\substack{f:\left\{  1,2,\ldots,n\right\}  \rightarrow\left\{
1,2,\ldots,n\right\}  ;\\f\left(  n\right)  =n}}\left(  \operatorname*{weight}%
\nolimits_{f}B\right)
\begin{cases}
0, & \text{if }f\text{ is not }n\text{-potent};\\
\left(  \operatorname*{abut}\nolimits_{f}A\right)  \cdot\det A, & \text{if
}f\text{ is }n\text{-potent}%
\end{cases}
\\
&  =\sum_{\substack{f:\left\{  1,2,\ldots,n\right\}  \rightarrow\left\{
1,2,\ldots,n\right\}  ;\\f\left(  n\right)  =n;\\f\text{ is }n\text{-potent}%
}}\left(  \operatorname*{weight}\nolimits_{f}B\right)  \left(
\operatorname*{abut}\nolimits_{f}A\right)  \cdot\det A\\
&  =\left(  \sum_{\substack{f:\left\{  1,2,\ldots,n\right\}  \rightarrow
\left\{  1,2,\ldots,n\right\}  ;\\f\left(  n\right)  =n;\\f\text{ is
}n\text{-potent}}}\left(  \operatorname*{weight}\nolimits_{f}B\right)  \left(
\operatorname*{abut}\nolimits_{f}A\right)  \right)  \cdot\det A.
\end{align*}

\end{proof}

\subsection{Deriving Theorem \ref{thm.mtt} from Theorem \ref{thm.supergen}}

Now let us see why Theorem \ref{thm.supergen} generalizes the matrix-tree theorem.

\begin{proof}
[Proof of Theorem \ref{thm.mtt}.]WLOG assume that $n\geq2$ (since the case
$n=1$ is easy to check by hand). Define an $n\times n$-matrix $A$ by
$A=\left(  a_{i,j}\right)  _{1\leq i\leq n,\ 1\leq j\leq n}$, where
\[
a_{i,j}=\delta_{i,j}+\delta_{j,n}\left(  1-\delta_{i,n}\right)  .
\]
(This scary formula hides a simple idea: this is the matrix whose entries on
the diagonal and in its last column are $1$, and all other entries are $0$.
Thus,%
\[
A=\left(
\begin{array}
[c]{ccccccc}%
1 & 0 & 0 & 0 & \cdots & 0 & 1\\
0 & 1 & 0 & 0 & \cdots & 0 & 1\\
0 & 0 & 1 & 0 & \cdots & 0 & 1\\
0 & 0 & 0 & 1 & \cdots & 0 & 1\\
\vdots & \vdots & \vdots & \vdots & \ddots & \vdots & \vdots\\
0 & 0 & 0 & 0 & \cdots & 1 & 1\\
0 & 0 & 0 & 0 & \cdots & 0 & 1
\end{array}
\right)  .
\]
) Note that every $\left(  i,j\right)  \in\left\{  1,2,\ldots,n-1\right\}
^{2}$ satisfies%
\begin{equation}
a_{i,j}=\delta_{i,j}+\underbrace{\delta_{j,n}}_{\substack{=0\\\text{(since
}j\neq n\\\text{(since }j\in\left\{  1,2,\ldots,n-1\right\}  \text{))}%
}}\left(  1-\delta_{i,n}\right)  =\delta_{i,j}. \label{pf.thm.mtt.aij=}%
\end{equation}
Also, every $i\in\left\{  1,2,\ldots,n-1\right\}  $ satisfies%
\begin{align}
a_{i,n}  &  =\underbrace{\delta_{i,n}}_{\substack{=0\\\text{(since }i\neq
n\text{)}}}+\underbrace{\delta_{n,n}}_{\substack{=1\\\text{(since }%
n=n\text{)}}}\left(  1-\underbrace{\delta_{i,n}}_{\substack{=0\\\text{(since
}i\neq n\text{)}}}\right)  \ \ \ \ \ \ \ \ \ \ \left(  \text{by the definition
of }a_{i,n}\right) \nonumber\\
&  =0+1\left(  1-0\right)  =1. \label{pf.thm.mtt.ain=}%
\end{align}

Also, let $B$ be the $n\times n$-matrix $\left(  W\left(  i,j\right)  \right)
_{1\leq i\leq n,\ 1\leq j\leq n}$. Write the $n\times n$-matrix $BA$ in the
form $BA=\left(  c_{i,j}\right)  _{1\leq i\leq n,\ 1\leq j\leq n}$. Then, it
is easy to see that every $\left(  i,j\right)  \in\left\{  1,2,\ldots
,n\right\}  ^{2}$ satisfies%
\begin{equation}
c_{i,j}=W\left(  i,j\right)  +\delta_{j,n}\left(  d^{+}\left(  i\right)
-W\left(  i,n\right)  \right)  \label{pf.thm.mtt.cij=}%
\end{equation}
\footnote{\textit{Proof of (\ref{pf.thm.mtt.cij=}):} For every $i\in\left\{
1,2,\ldots,n\right\}  $, we have%
\begin{align*}
d^{+}\left(  i\right)   &  =\sum_{j=1}^{n}W\left(  i,j\right)
\ \ \ \ \ \ \ \ \ \ \left(  \text{by the definition of }d^{+}\left(  i\right)
\right) \\
&  =\sum_{j=1}^{n-1}W\left(  i,j\right)  +W\left(  i,n\right)  =\sum
_{k=1}^{n-1}W\left(  i,k\right)  +W\left(  i,n\right)
\end{align*}
(here, we renamed the summation index $j$ as $k$) and thus%
\begin{equation}
\sum_{k=1}^{n-1}W\left(  i,k\right)  =d^{+}\left(  i\right)  -W\left(
i,n\right)  . \label{pf.thm.mtt.fn1.1}%
\end{equation}
\par
But%
\[
\left(  c_{i,j}\right)  _{1\leq i\leq n,\ 1\leq j\leq n}=BA=\left(  \sum
_{k=1}^{n}W\left(  i,k\right)  a_{k,j}\right)  _{1\leq i\leq n,\ 1\leq j\leq
n}%
\]
(by the definition of the product of two matrices, since $B=\left(  W\left(
i,j\right)  \right)  _{1\leq i\leq n,\ 1\leq j\leq n}$ and $A=\left(
a_{i,j}\right)  _{1\leq i\leq n,\ 1\leq j\leq n}$). Hence, every $\left(
i,j\right)  \in\left\{  1,2,\ldots,n\right\}  ^{2}$ satisfies%
\begin{align*}
c_{i,j}  &  =\sum_{k=1}^{n}W\left(  i,k\right)  \underbrace{a_{k,j}%
}_{\substack{=\delta_{k,j}+\delta_{j,n}\left(  1-\delta_{k,n}\right)
\\\text{(by the definition of }a_{k,j}\text{)}}}\\
&  =\sum_{k=1}^{n}W\left(  i,k\right)  \left(  \delta_{k,j}+\delta
_{j,n}\left(  1-\delta_{k,n}\right)  \right) \\
&  =\underbrace{\sum_{k=1}^{n}W\left(  i,k\right)  \delta_{k,j}}%
_{\substack{=W\left(  i,j\right)  \\\text{(because the factor }\delta
_{k,j}\text{ in the sum}\\\text{kills every addend except the one for
}k=j\text{)}}}+\delta_{j,n}\underbrace{\sum_{k=1}^{n}W\left(  i,k\right)
\left(  1-\delta_{k,n}\right)  }_{=\sum_{k=1}^{n-1}W\left(  i,k\right)
\left(  1-\delta_{k,n}\right)  +W\left(  i,n\right)  \left(  1-\delta
_{n,n}\right)  }\\
&  =W\left(  i,j\right)  +\delta_{j,n}\left(  \sum_{k=1}^{n-1}W\left(
i,k\right)  \left(  1-\underbrace{\delta_{k,n}}_{\substack{=0\\\text{(since
}k<n\text{)}}}\right)  +W\left(  i,n\right)  \underbrace{\left(
1-\delta_{n,n}\right)  }_{\substack{=0\\\text{(since }\delta_{n,n}=1\text{)}%
}}\right) \\
&  =W\left(  i,j\right)  +\delta_{j,n}\left(  \sum_{k=1}^{n-1}W\left(
i,k\right)  \underbrace{\left(  1-0\right)  }_{=1}+\underbrace{W\left(
i,n\right)  0}_{=0}\right) \\
&  =W\left(  i,j\right)  +\delta_{j,n}\underbrace{\sum_{k=1}^{n-1}W\left(
i,k\right)  }_{\substack{=d^{+}\left(  i\right)  -W\left(  i,n\right)
\\\text{(by (\ref{pf.thm.mtt.fn1.1}))}}}=W\left(  i,j\right)  +\delta
_{j,n}\left(  d^{+}\left(  i\right)  -W\left(  i,n\right)  \right)  ,
\end{align*}
and thus (\ref{pf.thm.mtt.cij=}) is proven.}.

Thus, for every $\left(  i,j\right)  \in\left\{  1,2,\ldots,n-1\right\}  ^{2}%
$, we have
\begin{align*}
&  \underbrace{a_{i,j}}_{\substack{=\delta_{i,j}\\\text{(by
(\ref{pf.thm.mtt.aij=}))}}}\underbrace{c_{i,n}}_{\substack{=W\left(
i,n\right)  +\delta_{n,n}\left(  d^{+}\left(  i\right)  -W\left(  i,n\right)
\right)  \\\text{(by (\ref{pf.thm.mtt.cij=}), applied}\\\text{to }n\text{
instead of }j\text{)}}}-\underbrace{a_{i,n}}_{\substack{=1\\\text{(by
(\ref{pf.thm.mtt.ain=}))}}}\underbrace{c_{i,j}}_{\substack{=W\left(
i,j\right)  +\delta_{j,n}\left(  d^{+}\left(  i\right)  -W\left(  i,n\right)
\right)  \\\text{(by (\ref{pf.thm.mtt.cij=}))}}}\\
&  =\delta_{i,j}\left(  W\left(  i,n\right)  +\underbrace{\delta_{n,n}}%
_{=1}\left(  d^{+}\left(  i\right)  -W\left(  i,n\right)  \right)  \right)
-\left(  W\left(  i,j\right)  +\underbrace{\delta_{j,n}}%
_{\substack{=0\\\text{(since }j<n\text{)}}}\left(  d^{+}\left(  i\right)
-W\left(  i,n\right)  \right)  \right)  \\
&  =\delta_{i,j}\underbrace{\left(  W\left(  i,n\right)  +\left(  d^{+}\left(
i\right)  -W\left(  i,n\right)  \right)  \right)  }_{=d^{+}\left(  i\right)
}-W\left(  i,j\right)  =\delta_{i,j}d^{+}\left(  i\right)  -W\left(
i,j\right)  .
\end{align*}
Hence,%
\[
\left(  a_{i,j}c_{i,n}-a_{i,n}c_{i,j}\right)  _{1\leq i\leq n-1,\ 1\leq j\leq
n-1}=\left(  \delta_{i,j}d^{+}\left(  i\right)  -W\left(  i,j\right)  \right)
_{1\leq i\leq n-1,\ 1\leq j\leq n-1}=L.
\]
In other words, $L$ is the matrix $\left(  a_{i,j}c_{i,n}-a_{i,n}%
c_{i,j}\right)  _{1\leq i\leq n-1,\ 1\leq j\leq n-1}\in\mathbb{K}^{\left(
n-1\right)  \times\left(  n-1\right)  }$. Thus, Theorem \ref{thm.supergen}
(applied to $G=L$) yields%
\begin{align*}
\det L &  =\left(  \sum_{\substack{f:\left\{  1,2,\ldots,n\right\}
\rightarrow\left\{  1,2,\ldots,n\right\}  ;\\f\left(  n\right)  =n;\\f\text{
is }n\text{-potent}}}\underbrace{\left(  \operatorname*{weight}\nolimits_{f}%
B\right)  }_{=\prod_{i=1}^{n-1}W\left(  i,f\left(  i\right)  \right)
}\underbrace{\left(  \operatorname*{abut}\nolimits_{f}A\right)  }_{=1}\right)
\cdot\underbrace{\det A}_{=1}\\
&  =\sum_{\substack{f:\left\{  1,2,\ldots,n\right\}  \rightarrow\left\{
1,2,\ldots,n\right\}  ;\\f\left(  n\right)  =n;\\f\text{ is }n\text{-potent}%
}}\prod_{i=1}^{n-1}W\left(  i,f\left(  i\right)  \right)  .
\end{align*}
This proves Theorem \ref{thm.mtt}.
\end{proof}

\subsection{Some combinatorial lemmas}

We still owe the reader a proof of Theorem \ref{thm.chio-gen}. We prepare by
proving some properties of maps $f:\left\{  1,2,\ldots,n\right\}
\rightarrow\left\{  1,2,\ldots,n\right\}  $.

\begin{proposition}
\label{prop.map.image}Let $n\in\mathbb{N}$. Let $f:\left\{  1,2,\ldots
,n\right\}  \rightarrow\left\{  1,2,\ldots,n\right\}  $ be a map. Let
$i\in\left\{  1,2,\ldots,n\right\}  $. Then,%
\[
f^{k}\left(  i\right)  \in\left\{  f^{s}\left(  i\right)  \ \mid\ s\in\left\{
0,1,\ldots,n-1\right\}  \right\}  \ \ \ \ \ \ \ \ \ \ \text{for every }%
k\in\mathbb{N}.
\]

\end{proposition}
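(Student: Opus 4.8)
The plan is to show that the forward orbit of $i$ under $f$ is entirely contained in the finite set of its first $n$ iterates, namely $\left\{ f^{s}\left( i\right) \ \mid\ s\in\left\{ 0,1,\ldots,n-1\right\} \right\} $. The intuition is the familiar ``rho-shape'' of an orbit under a self-map of a finite set: since the set $\left\{ 1,2,\ldots,n\right\} $ has only $n$ elements, the $n+1$ values $f^{0}\left( i\right) ,f^{1}\left( i\right) ,\ldots,f^{n}\left( i\right) $ cannot all be distinct, so the orbit must become periodic after at most $n-1$ steps, and every later iterate coincides with one of the first $n$.

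First I would fix $i$ and set $P=\left\{ f^{s}\left( i\right) \ \mid\ s\in\left\{ 0,1,\ldots,n-1\right\} \right\} $, the target set on the right-hand side. I would then prove the claim $f^{k}\left( i\right) \in P$ by induction on $k\in\mathbb{N}$. For $k\leq n-1$ the claim is immediate, since $f^{k}\left( i\right) $ is literally one of the listed elements; so the base cases are free. For the inductive step, the real work is to handle $k\geq n$, assuming $f^{m}\left( i\right) \in P$ for all $m<k$ (or at least for $m=k-1$).

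The key step is the pigeonhole argument that drives the induction past the threshold $k=n$. From the induction hypothesis, $f^{k-1}\left( i\right) \in P$, so $f^{k-1}\left( i\right) =f^{s}\left( i\right) $ for some $s\in\left\{ 0,1,\ldots,n-1\right\} $. Applying $f$ once to both sides gives $f^{k}\left( i\right) =f^{s+1}\left( i\right) $. Now either $s+1\leq n-1$, in which case $f^{s+1}\left( i\right) \in P$ directly; or $s+1=n$, and I must argue that $f^{n}\left( i\right) \in P$ as well. This last edge case is the only genuinely delicate point: I expect to handle it by the pigeonhole principle, observing that the $n+1$ elements $f^{0}\left( i\right) ,\ldots,f^{n}\left( i\right) $ lie in a set of size $n$, hence two of them coincide, say $f^{a}\left( i\right) =f^{b}\left( i\right) $ with $0\leq a<b\leq n$; this yields a collapse that forces $f^{n}\left( i\right) $ back into $P$.

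The main obstacle I anticipate is organizing the induction cleanly so that the transition at $k=n$ is not special-cased awkwardly. A slicker alternative, which I would probably prefer in the write-up, is to avoid treating $f^{n}\left( i\right) $ separately: instead I would prove by strong induction the single statement that for every $k\geq n-1$ there exists $s\in\left\{ 0,1,\ldots,n-1\right\} $ with $f^{k}\left( i\right) =f^{s}\left( i\right) $, using pigeonhole once at the start to locate a repeat $f^{a}\left( i\right) =f^{b}\left( i\right) $ with $a<b\leq n-1$ within the first $n$ iterates, and then reducing any large exponent modulo the period $b-a$. Either route is elementary; the substance is entirely the pigeonhole observation that a self-map of an $n$-element set has orbits that stabilize within $n$ steps.
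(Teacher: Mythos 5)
Your primary route is correct and is essentially the paper's own proof: pigeonhole on the $n+1$ values $f^{0}\left(i\right),f^{1}\left(i\right),\ldots,f^{n}\left(i\right)$ to get a collision, then an induction showing that the orbit of $i$ never leaves a finite prefix of itself (the paper packages this by showing that the set $S$ of iterates up to the collision point is closed under $f$, while you show the closure of $P$ directly with the wrap-around case $f^{n}\left(i\right)\in P$, but the substance is identical, and your sketched ``collapse'' $f^{n}\left(i\right)=f^{n-b+a}\left(i\right)$ with $0\leq n-b+a\leq n-1$ does work). However, beware of your ``slicker alternative,'' which you say you would prefer in the write-up: it contains a genuine off-by-one error. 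Among the first $n$ iterates $f^{0}\left(i\right),\ldots,f^{n-1}\left(i\right)$, the pigeonhole principle gives no collision at all, since these are only $n$ values in an $n$-element set; indeed, if $f$ is an $n$-cycle, then $f^{0}\left(i\right),\ldots,f^{n-1}\left(i\right)$ are pairwise distinct, so no repeat $f^{a}\left(i\right)=f^{b}\left(i\right)$ with $a<b\leq n-1$ exists. You genuinely need the $\left(n+1\right)$-st iterate, i.e., a collision with $a<b\leq n$; with that correction the modular-reduction argument goes through (for $k\geq b$ one has $f^{k}\left(i\right)=f^{k-\left(b-a\right)}\left(i\right)$, and repeatedly subtracting $b-a$ lands the exponent in $\left\{a,a+1,\ldots,b-1\right\}\subseteq\left\{0,1,\ldots,n-1\right\}$).
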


Proposition \ref{prop.map.image} is a classical fact; we give the proof in the
Appendix below.

The following three results can be easily derived from Proposition
\ref{prop.map.image}; we shall give more detailed proofs in the Appendix:

\begin{proposition}
\label{prop.map.preim}Let $n$ be a positive integer. Let $f:\left\{
1,2,\ldots,n\right\}  \rightarrow\left\{  1,2,\ldots,n\right\}  $ be a map
such that $f\left(  n\right)  =n$. Let $i\in\left\{  1,2,\ldots,n\right\}  $.
Then, $f^{n-1}\left(  i\right)  =n$ if and only if there exists some
$k\in\mathbb{N}$ such that $f^{k}\left(  i\right)  =n$.
\end{proposition}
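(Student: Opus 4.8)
The plan is to prove the two implications separately, with the forward direction being essentially free and the converse being where Proposition \ref{prop.map.image} does the real work. For the forward direction, if $f^{n-1}\left(i\right)=n$, then $k=n-1$ is a value of $k\in\mathbb{N}$ satisfying $f^{k}\left(i\right)=n$, so the existence claim holds immediately.

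For the converse, I would first record the (obvious but essential) fact that $n$ is a fixed point of every iterate of $f$: since $f\left(n\right)=n$, an easy induction on $t$ shows that $f^{t}\left(n\right)=n$ for all $t\in\mathbb{N}$. This is the feature that lets me "upgrade" an early visit to $n$ into a visit at step exactly $n-1$.

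Next, assume there exists some $k\in\mathbb{N}$ with $f^{k}\left(i\right)=n$. By Proposition \ref{prop.map.image}, we have $f^{k}\left(i\right)\in\left\{f^{s}\left(i\right)\ \mid\ s\in\left\{0,1,\ldots,n-1\right\}\right\}$; since $f^{k}\left(i\right)=n$, this yields some $s\in\left\{0,1,\ldots,n-1\right\}$ with $f^{s}\left(i\right)=n$. Because $s\leq n-1$, the exponent $\left(n-1\right)-s$ is a nonnegative integer, so I can write
\[
f^{n-1}\left(i\right)=f^{\left(n-1\right)-s}\left(f^{s}\left(i\right)\right)=f^{\left(n-1\right)-s}\left(n\right)=n,
\]
where the last equality uses the fixed-point observation above. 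This establishes $f^{n-1}\left(i\right)=n$ and completes the converse.

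The only place requiring any thought is the converse, and its single nontrivial ingredient is Proposition \ref{prop.map.image}: the point is that the orbit of $i$ under $f$ is already exhausted by the first $n-1$ applications, so if $n$ is ever reached it must already be reached by step $n-1$; the fixed-point property of $n$ then propagates this to step exactly $n-1$. I expect no genuine obstacle here, as both ingredients are available and combine directly.
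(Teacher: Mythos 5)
Your proof is correct and takes essentially the same approach as the paper: the forward direction is the trivial choice $k=n-1$, and the converse uses Proposition \ref{prop.map.image} to produce $s\in\left\{0,1,\ldots,n-1\right\}$ with $f^{s}\left(i\right)=n$, then propagates this to step $n-1$ via the fixed-point property of $n$. The only cosmetic difference is packaging: the paper runs an induction showing $f^{h}\left(i\right)=n$ for all $h\geq s$, while you write $f^{n-1}\left(i\right)=f^{\left(n-1\right)-s}\left(f^{s}\left(i\right)\right)$ using $f^{t}\left(n\right)=n$; these are the same argument.
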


\begin{proposition}
\label{prop.n-potent.calc}Let $n$ be a positive integer. Let $f:\left\{
1,2,\ldots,n\right\}  \rightarrow\left\{  1,2,\ldots,n\right\}  $ be a map
such that $f\left(  n\right)  =n$. Then, the map $f$ is $n$-potent if and only
if $f^{n-1}\left(  \left\{  1,2,\ldots,n\right\}  \right)  =\left\{
n\right\}  $.
\end{proposition}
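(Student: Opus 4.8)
The plan is to reduce the whole statement to Proposition~\ref{prop.map.preim} by unwinding the definition of $n$-potence. Both conditions in the claimed equivalence are, at bottom, quantified statements ranging over the elements $i \in \{1,2,\ldots,n\}$, so I would first rephrase each of them as such a quantified statement and then match them up pointwise.

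First I would recall that, by definition, $f$ is $n$-potent if and only if for every $i \in \{1,2,\ldots,n\}$ there exists some $k \in \mathbb{N}$ with $f^{k}(i) = n$. Next, for each fixed $i$, I would invoke Proposition~\ref{prop.map.preim} (which applies since $f(n) = n$) to replace the existential statement ``there exists $k \in \mathbb{N}$ with $f^{k}(i) = n$'' by the single equality $f^{n-1}(i) = n$. Combining these equivalences over all $i$, the condition ``$f$ is $n$-potent'' becomes exactly ``$f^{n-1}(i) = n$ for every $i \in \{1,2,\ldots,n\}$''.

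It then remains to observe that this last condition is equivalent to the set equation $f^{n-1}(\{1,2,\ldots,n\}) = \{n\}$. Unfolding the image notation gives $f^{n-1}(\{1,2,\ldots,n\}) = \{f^{n-1}(i) \mid i \in \{1,2,\ldots,n\}\}$, so ``$f^{n-1}(i) = n$ for all $i$'' says precisely that this image is contained in $\{n\}$; and since $\{1,2,\ldots,n\}$ is nonempty, the image is nonempty, so the inclusion in $\{n\}$ upgrades to equality. Chaining these equivalences together proves the proposition.

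I do not expect any genuine obstacle here: the substance of the argument is carried entirely by Proposition~\ref{prop.map.preim}, and what remains is bookkeeping. The only point requiring a moment's care is the passage from the pointwise condition to the set equation, namely remembering that the inclusion $f^{n-1}(\{1,2,\ldots,n\}) \subseteq \{n\}$ forces equality only because the domain (and hence the image) is nonempty.
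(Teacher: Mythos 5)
Your proposal is correct and follows essentially the same route as the paper: both reduce the claim pointwise to Proposition~\ref{prop.map.preim} (whose $\Longrightarrow$ direction is the trivial witness $k=n-1$) and then handle the bookkeeping between the pointwise statement ``$f^{n-1}(i)=n$ for all $i$'' and the set equation $f^{n-1}\left(\left\{1,2,\ldots,n\right\}\right)=\left\{n\right\}$. The only cosmetic difference is that you phrase the argument as a single chain of equivalences while the paper splits it into two implications, and your remark about nonemptiness of the image is the same observation the paper makes by writing the image as $\left\{\underbrace{n,n,\ldots,n}_{n\text{ times}}\right\}=\left\{n\right\}$.
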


\begin{corollary}
\label{cor.n-potent.delta}Let $n$ be a positive integer. Let $f:\left\{
1,2,\ldots,n\right\}  \rightarrow\left\{  1,2,\ldots,n\right\}  $ be a map
such that $f\left(  n\right)  =n$. Let $i\in\left\{  1,2,\ldots,n\right\}  $.
Then, $\delta_{f^{n-1}\left(  i\right)  ,n}=\delta_{f^{n}\left(  i\right)
,n}$.
\end{corollary}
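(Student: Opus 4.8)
The plan is to reduce the claimed equality of Kronecker deltas to a logical equivalence, and then prove the two implications separately. The first observation is that for any $a,b\in\left\{1,2,\ldots,n\right\}$, the equality $\delta_{a,n}=\delta_{b,n}$ holds if and only if we have the equivalence $\left(a=n\right)\Longleftrightarrow\left(b=n\right)$ (since each $\delta_{\bullet,n}$ equals $1$ precisely when its first index is $n$, and $0$ otherwise). Applying this with $a=f^{n-1}\left(i\right)$ and $b=f^{n}\left(i\right)$, I see that it suffices to prove that $f^{n-1}\left(i\right)=n$ if and only if $f^{n}\left(i\right)=n$.

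For the forward implication, I would assume $f^{n-1}\left(i\right)=n$ and apply $f$ one more time, using the hypothesis $f\left(n\right)=n$; this gives $f^{n}\left(i\right)=f\left(f^{n-1}\left(i\right)\right)=f\left(n\right)=n$, as desired.

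For the reverse implication, I would assume $f^{n}\left(i\right)=n$. Then the natural number $k=n$ witnesses that there exists some $k\in\mathbb{N}$ with $f^{k}\left(i\right)=n$, so Proposition \ref{prop.map.preim} immediately yields $f^{n-1}\left(i\right)=n$. Combining the two implications gives the equivalence $\left(f^{n-1}\left(i\right)=n\right)\Longleftrightarrow\left(f^{n}\left(i\right)=n\right)$, and hence the equality $\delta_{f^{n-1}\left(i\right),n}=\delta_{f^{n}\left(i\right),n}$.

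I do not anticipate any genuine obstacle here: the corollary is essentially a packaging of Proposition \ref{prop.map.preim}. The only two points requiring a moment's care are the translation of the delta equality into a biconditional at the start, and the recognition that Proposition \ref{prop.map.preim} is exactly the tool that converts the existence of some $k$ with $f^{k}\left(i\right)=n$ into the single statement $f^{n-1}\left(i\right)=n$. Everything else is a one-line use of the hypothesis $f\left(n\right)=n$.
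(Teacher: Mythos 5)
Your proof is correct and follows essentially the same route as the paper: both reduce the delta equality to the equivalence of $f^{n-1}\left(i\right)=n$ and $f^{n}\left(i\right)=n$, handle one direction by applying $f$ and using $f\left(n\right)=n$, and handle the other via the $\Longleftarrow$ direction of Proposition \ref{prop.map.preim} with witness $k=n$. The paper merely phrases this as a two-case analysis on whether $f^{n-1}\left(i\right)=n$ (with your reverse implication appearing as a proof by contradiction in its second case), so the content is identical.
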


One consequence of Proposition \ref{prop.n-potent.calc} is the following: If
$n$ is a positive integer, and if $f:\left\{  1,2,\ldots,n\right\}
\rightarrow\left\{  1,2,\ldots,n\right\}  $ is a map such that $f\left(
n\right)  =n$, then we can check in finite time whether the map $f$ is
$n$-potent (because we can check in finite time whether $f^{n-1}\left(
\left\{  1,2,\ldots,n\right\}  \right)  =\left\{  n\right\}  $). Thus, for any
given positive integer $n$, it is possible to enumerate all $n$-potent maps
$f:\left\{  1,2,\ldots,n\right\}  \rightarrow\left\{  1,2,\ldots,n\right\}  $.

Next, we shall show a property of $n$-potent maps:

\begin{lemma}
\label{lem.n-potent.sigma}Let $n$ be a positive integer. Let $f:\left\{
1,2,\ldots,n\right\}  \rightarrow\left\{  1,2,\ldots,n\right\}  $ be a map
such that $f\left(  n\right)  =n$. Assume that $f$ is $n$-potent.

Let $\sigma\in S_{n}$ be a permutation such that $\sigma\neq\operatorname*{id}%
$. Then, there exists some $i\in\left\{  1,2,\ldots,n\right\}  $ such that
$\sigma\left(  i\right)  \notin\left\{  i,f\left(  i\right)  \right\}  $.
\end{lemma}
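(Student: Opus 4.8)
The plan is to argue by contradiction: suppose that every $i \in \{1,2,\ldots,n\}$ satisfies $\sigma(i) \in \{i, f(i)\}$, and derive that $\sigma = \operatorname{id}$. The key structural fact I would exploit is that an $n$-potent map $f$ has no nontrivial cycles other than the fixed point at $n$ — equivalently, by Remark \ref{rmk.n-potent.trees}, $f$ encodes a tree, so the only way to return to where you started under iteration of $f$ is to sit still. I would want to leverage that $\sigma$, being a permutation, is a bijection, whereas the constraint $\sigma(i) \in \{i, f(i)\}$ forces $\sigma$ to ``follow the arrows of $f$'' at each point where it is not the identity.

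First I would set $M = \{i \in \{1,2,\ldots,n\} : \sigma(i) = f(i) \text{ and } \sigma(i) \neq i\}$, the set of points that are genuinely moved (so $\sigma(i) \neq i$ there), and observe that on $M$ the permutation $\sigma$ acts exactly as $f$ does. If I can show $M = \varnothing$, then $\sigma(i) = i$ for all $i$, contradicting $\sigma \neq \operatorname{id}$. Since $\sigma$ is a bijection, I would pass to the restriction: for $i \in M$ we have $\sigma(i) = f(i)$, and I want to show this is impossible. The natural route is to follow a trajectory. Pick $i_0 \in M$; then $\sigma(i_0) = f(i_0) =: i_1$. Now consider $i_1$. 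Because $f$ is $n$-potent, repeated application of $f$ starting from $i_0$ eventually reaches $n$ (using Proposition \ref{prop.n-potent.calc} or the definition directly), and $n$ is a fixed point of both $f$ and — I would argue — of $\sigma$.

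The crux of the argument is a counting/injectivity clash. Since $\sigma$ agrees with $f$ on $M$ and is the identity off $M$, and since $\sigma$ is injective, $f$ restricted to $M$ must be injective with image disjoint from the fixed points. But following the $f$-trajectory of any $i_0 \in M$ must terminate at $n$ (by $n$-potence), so there is a last index $g \in M$ along this trajectory with $f(g) = n$; then $\sigma(g) = n$. On the other hand, $n$ is a fixed point of $f$, so if $n$ were moved by $\sigma$ we would need $\sigma(n) = f(n) = n$, i.e. $n$ is fixed; thus $n \notin M$ and $\sigma(n) = n$. But then $\sigma(g) = n = \sigma(n)$ with $g \neq n$, contradicting injectivity of $\sigma$. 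This contradiction shows $M = \varnothing$.

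The step I expect to be the main obstacle is rigorously producing the element $g$ with $f(g) = n$ inside $M$ and simultaneously controlling the behavior of $\sigma$ at the endpoint $n$; the subtlety is that knowing $\sigma(i) \in \{i, f(i)\}$ for every $i$ does not, a priori, tell me that the $f$-trajectory of a moved point stays inside $M$ until it hits $n$. I would handle this by choosing $i_0 \in M$ to be a point whose $f$-distance to $n$ is \emph{minimal} among points of $M$ (such a distance is finite and well-defined by $n$-potence): then $i_1 = f(i_0)$ is strictly closer to $n$, so either $i_1 = n$ (giving $\sigma(i_0) = n = \sigma(n)$ immediately, a contradiction) or $i_1 \notin M$, forcing $\sigma(i_1) = i_1 = f(i_0) = \sigma(i_0)$, again contradicting injectivity since $i_0 \neq i_1$. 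This minimal-distance choice cleanly sidesteps the need to track an entire trajectory and reduces the whole lemma to a one-step injectivity violation.
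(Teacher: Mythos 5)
Your final argument (the minimal-distance version in your last paragraph) is correct, and it takes a genuinely different route from the paper. The paper also argues by contradiction, but it fixes an arbitrary moved point $h$ and proves by induction over $j$ that $\sigma^{j}\left(  h\right)  =f^{j}\left(  h\right)$ for all $j\in\mathbb{N}$ (the induction step rules out $\sigma^{J+1}\left(  h\right)  =\sigma^{J}\left(  h\right)$ via injectivity); it then uses $n$-potence to get $\sigma^{k}\left(  h\right)  =n$ and $\sigma^{k+1}\left(  h\right)  =n$, and cancels $\sigma^{k}$ to force $\sigma\left(  h\right)  =h$. You instead introduce the set $M$ of moved points, observe that $\sigma$ is the identity off $M$ and that $n\notin M$ (since $f\left(  n\right)  =n$ forces $\sigma\left(  n\right)  =n$), and then pick $i_{0}\in M$ whose $f$-distance to $n$ is minimal: then $i_{1}=f\left(  i_{0}\right)$ lies outside $M$, so $\sigma\left(  i_{1}\right)  =i_{1}=\sigma\left(  i_{0}\right)$ with $i_{1}\neq i_{0}$, a one-step injectivity violation. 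This extremal choice replaces the paper's orbit-tracking induction entirely; it is shorter and localizes the contradiction to a single application of $f$, at the cost of invoking a well-ordering (minimal-distance) selection, whereas the paper's proof works with any moved point but must carry the induction along the whole trajectory. One caveat: your middle paragraph (producing a ``last index $g\in M$ along the trajectory with $f\left(  g\right)  =n$'') is not justified as stated -- exactly the gap you yourself flag -- so the proof stands only on the final paragraph; alternatively, that gap could also be closed by noting that injectivity of $\sigma$ directly forces $f\left(  M\right)  \subseteq M$, which contradicts $n$-potence since $n\notin M$.
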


\begin{proof}
[Proof of Lemma \ref{lem.n-potent.sigma}.]Assume the contrary. Thus,
$\sigma\left(  i\right)  \in\left\{  i,f\left(  i\right)  \right\}  $ for
every $i\in\left\{  1,2,\ldots,n\right\}  $.

We have $\sigma\neq\operatorname*{id}$. Hence, there exists some $h\in\left\{
1,2,\ldots,n\right\}  $ such that $\sigma\left(  h\right)  \neq h$. Fix such a
$h$. We shall prove that%
\begin{equation}
\sigma^{j}\left(  h\right)  =f^{j}\left(  h\right)
\ \ \ \ \ \ \ \ \ \ \text{for every }j\in\mathbb{N}.
\label{pf.thm.chio-gen.alg.fn4.1}%
\end{equation}

Indeed, we shall prove this by induction over $j$. The induction base (the
case $j=0$) is obvious. For the induction step, fix $J\in\mathbb{N}$, and
assume that $\sigma^{J}\left(  h\right)  =f^{J}\left(  h\right)  $. We need to
prove that $\sigma^{J+1}\left(  h\right)  =f^{J+1}\left(  h\right)  $.

We have assumed that $\sigma\left(  i\right)  \in\left\{  i,f\left(  i\right)
\right\}  $ for every $i\in\left\{  1,2,\ldots,n\right\}  $. Applying this to
$i=\sigma^{J}\left(  h\right)  $, we obtain $\sigma\left(  \sigma^{J}\left(
h\right)  \right)  \in\left\{  \sigma^{J}\left(  h\right)  ,f\left(
\sigma^{J}\left(  h\right)  \right)  \right\}  $. In other words,
$\sigma^{J+1}\left(  h\right)  \in\left\{  \sigma^{J}\left(  h\right)
,f\left(  \sigma^{J}\left(  h\right)  \right)  \right\}  $. Thus, either
$\sigma^{J+1}\left(  h\right)  =\sigma^{J}\left(  h\right)  $ or $\sigma
^{J+1}\left(  h\right)  =f\left(  \sigma^{J}\left(  h\right)  \right)  $.
Since $\sigma^{J+1}\left(  h\right)  =\sigma^{J}\left(  h\right)  $ is
impossible (because in light of the invertibility of $\sigma$, this would
yield $\sigma\left(  h\right)  =h$, which contradicts $\sigma\left(  h\right)
\neq h$), we thus must have $\sigma^{J+1}\left(  h\right)  =f\left(
\sigma^{J}\left(  h\right)  \right)  $. Hence, $\sigma^{J+1}\left(  h\right)
=f\left(  \underbrace{\sigma^{J}\left(  h\right)  }_{=f^{J}\left(  h\right)
}\right)  =f\left(  f^{J}\left(  h\right)  \right)  =f^{J+1}\left(  h\right)
$. This completes the induction step.

Thus, (\ref{pf.thm.chio-gen.alg.fn4.1}) is proven.

But $f$ is $n$-potent. Hence, there exists some $k\in\mathbb{N}$ such that
$f^{k}\left(  h\right)  =n$. Consider this $k$. Applying
(\ref{pf.thm.chio-gen.alg.fn4.1}) to $j=k$, we obtain $\sigma^{k}\left(
h\right)  =f^{k}\left(  h\right)  =n$.

But applying (\ref{pf.thm.chio-gen.alg.fn4.1}) to $j=k+1$, we obtain
$\sigma^{k+1}\left(  h\right)  =f^{k+1}\left(  h\right)  =f\left(
\underbrace{f^{k}\left(  h\right)  }_{=n}\right)  =f\left(  n\right)  =n$.
Hence, $n=\sigma^{k+1}\left(  h\right)  =\sigma^{k}\left(  \sigma\left(
h\right)  \right)  $, so that $\sigma^{k}\left(  \sigma\left(  h\right)
\right)  =n=\sigma^{k}\left(  h\right)  $. Since $\sigma^{k}$ is invertible,
this entails $\sigma\left(  h\right)  =h$, which contradicts $\sigma\left(
h\right)  \neq h$. This contradiction proves that our assumption was wrong.
Thus, Lemma \ref{lem.n-potent.sigma} is proven.
\end{proof}

\subsection{The matrix $Z_{f}$ and its determinant}

Next, we assign a matrix $Z_{f}$ to every such $f:\left\{  1,2,\ldots
,n\right\}  \rightarrow\left\{  1,2,\ldots,n\right\}  $:

\begin{definition}
\label{def.Zf}Let $n$ be a positive integer. Let $f:\left\{  1,2,\ldots
,n\right\}  \rightarrow\left\{  1,2,\ldots,n\right\}  $ be a map. Then, we
define an $n\times n$-matrix $Z_{f}\in\mathbb{K}^{n\times n}$ by%
\[
Z_{f}=\left(  \delta_{i,j}-\left(  1-\delta_{i,n}\right)  \delta_{f\left(
i\right)  ,j}\right)  _{1\leq i\leq n,\ 1\leq j\leq n}.
\]

\end{definition}

\begin{example}
For this example, set $n=4$, and define a map $f:\left\{  1,2,3,4\right\}
\rightarrow\left\{  1,2,3,4\right\}  $ by $\left(  f\left(  1\right)
,f\left(  2\right)  ,f\left(  3\right)  ,f\left(  4\right)  \right)  =\left(
2,4,1,4\right)  $. Then,%
\[
Z_{f}=\left(
\begin{array}
[c]{cccc}%
1 & -1 & 0 & 0\\
0 & 1 & 0 & -1\\
-1 & 0 & 1 & 0\\
0 & 0 & 0 & 1
\end{array}
\right)  .
\]

\end{example}

Now, we claim the following:

\begin{proposition}
\label{prop.n-potent.Zv}Let $n$ be a positive integer. Let $f:\left\{
1,2,\ldots,n\right\}  \rightarrow\left\{  1,2,\ldots,n\right\}  $ be a map
such that $f\left(  n\right)  =n$. Let $v_{f}$ be the column vector $\left(
1-\delta_{f^{n-1}\left(  i\right)  ,n}\right)  _{1\leq i\leq n,\ 1\leq j\leq
1}\in\mathbb{K}^{n\times1}$. Then, $Z_{f}v_{f}=0_{n\times1}$.

(Recall that $0_{n\times1}$ denotes the $n\times1$ zero matrix, i.e., the
column vector with $n$ entries whose all entries are $0$.)
\end{proposition}

\begin{proof}
[Proof of Proposition \ref{prop.n-potent.Zv}.]We shall prove that
\begin{equation}
\sum_{k=1}^{n}\left(  \delta_{i,k}-\left(  1-\delta_{i,n}\right)
\delta_{f\left(  i\right)  ,k}\right)  \left(  1-\delta_{f^{n-1}\left(
k\right)  ,n}\right)  =0 \label{pf.prop.n-potent.Zv.1}%
\end{equation}
for every $i\in\left\{  1,2,\ldots,n\right\}  $.

\textit{Proof of (\ref{pf.prop.n-potent.Zv.1}):} Let $i\in\left\{
1,2,\ldots,n\right\}  $. Corollary \ref{cor.n-potent.delta} yields
$\delta_{f^{n-1}\left(  i\right)  ,n}=\delta_{f^{n}\left(  i\right)  ,n}$.

On the other hand, $f\left(  n\right)  =n$. Thus, it is straightforward to see
(by induction over $h$) that $f^{h}\left(  n\right)  =n$ for every
$h\in\mathbb{N}$. Applying this to $h=n$, we obtain $f^{n}\left(  n\right)
=n$.

Now,%
\begin{align*}
&  \sum_{k=1}^{n}\left(  \delta_{i,k}-\left(  1-\delta_{i,n}\right)
\delta_{f\left(  i\right)  ,k}\right)  \left(  1-\delta_{f^{n-1}\left(
k\right)  ,n}\right) \\
&  =\underbrace{\sum_{k=1}^{n}\delta_{i,k}\left(  1-\delta_{f^{n-1}\left(
k\right)  ,n}\right)  }_{\substack{=1-\delta_{f^{n-1}\left(  i\right)
,n}\\\text{(because the factor }\delta_{i,k}\text{ in the sum}\\\text{kills
every addend except the one for }k=i\text{)}}}-\underbrace{\sum_{k=1}%
^{n}\left(  1-\delta_{i,n}\right)  \delta_{f\left(  i\right)  ,k}\left(
1-\delta_{f^{n-1}\left(  k\right)  ,n}\right)  }_{\substack{=\left(
1-\delta_{i,n}\right)  \left(  1-\delta_{f^{n-1}\left(  f\left(  i\right)
\right)  ,n}\right)  \\\text{(because the factor }\delta_{f\left(  i\right)
,k}\text{ in the sum}\\\text{kills every addend except the one for }k=f\left(
i\right)  \text{)}}}\\
&  =\left(  1-\underbrace{\delta_{f^{n-1}\left(  i\right)  ,n}}_{=\delta
_{f^{n}\left(  i\right)  ,n}}\right)  -\left(  1-\delta_{i,n}\right)  \left(
1-\underbrace{\delta_{f^{n-1}\left(  f\left(  i\right)  \right)  ,n}}%
_{=\delta_{f^{n}\left(  i\right)  ,n}}\right) \\
&  =\left(  1-\delta_{f^{n}\left(  i\right)  ,n}\right)  -\left(
1-\delta_{i,n}\right)  \left(  1-\delta_{f^{n}\left(  i\right)  ,n}\right) \\
&  =\underbrace{\left(  1-\left(  1-\delta_{i,n}\right)  \right)  }%
_{=\delta_{i,n}}\left(  1-\delta_{f^{n}\left(  i\right)  ,n}\right)
=\delta_{i,n}\left(  1-\delta_{f^{n}\left(  i\right)  ,n}\right) \\
&  =%
\begin{cases}
0, & \text{if }i\neq n;\\
1-\delta_{f^{n}\left(  n\right)  ,n}, & \text{if }i=n
\end{cases}
=%
\begin{cases}
0, & \text{if }i\neq n;\\
0, & \text{if }i=n
\end{cases}
\\
&  \ \ \ \ \ \ \ \ \ \ \left(  \text{since }f^{n}\left(  n\right)  =n\text{
and thus }\delta_{f^{n}\left(  n\right)  ,n}=\delta_{n,n}=1\text{ and hence
}1-\delta_{f^{n}\left(  n\right)  ,n}=0\right) \\
&  =0.
\end{align*}
This proves (\ref{pf.prop.n-potent.Zv.1}).

Recall now that
\[
Z_{f}=\left(  \delta_{i,j}-\left(  1-\delta_{i,n}\right)  \delta_{f\left(
i\right)  ,j}\right)  _{1\leq i\leq n,\ 1\leq j\leq n}%
\]
and $v_{f}=\left(  1-\delta_{f^{n-1}\left(  i\right)  ,n}\right)  _{1\leq
i\leq n,\ 1\leq j\leq1}$. Hence, the definition of the product of two matrices
yields%
\begin{align*}
Z_{f}v_{f}  &  =\left(  \underbrace{\sum_{k=1}^{n}\left(  \delta_{i,k}-\left(
1-\delta_{i,n}\right)  \delta_{f\left(  i\right)  ,k}\right)  \left(
1-\delta_{f^{n-1}\left(  k\right)  ,n}\right)  }_{\substack{=0\\\text{(by
(\ref{pf.prop.n-potent.Zv.1}))}}}\right)  _{1\leq i\leq n,\ 1\leq j\leq1}\\
&  =\left(  0\right)  _{1\leq i\leq n,\ 1\leq j\leq1}=0_{n\times1}.
\end{align*}
This proves Proposition \ref{prop.n-potent.Zv}.
\end{proof}

Now, we recall the following well-known properties of
determinants\footnote{For the sake of completeness: Lemma \ref{lem.adj.kernel}
is \cite[Corollary 6.102]{detnotes}; Lemma \ref{lem.det.ain=0} is
\cite[Corollary 6.45]{detnotes}.}:

\begin{lemma}
\label{lem.adj.kernel}Let $n\in\mathbb{N}$. Let $A$ be an $n\times n$-matrix.
Let $v$ be a column vector with $n$ entries. If $Av=0_{n\times1}$, then $\det
A\cdot v=0_{n\times1}$.
\end{lemma}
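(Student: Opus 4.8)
The plan is to deduce the claim from the fundamental adjugate identity, which is the commutative-ring version of Cramer's rule. Recall the adjugate (classical adjoint) matrix $\operatorname{adj} A$ of $A$, whose $\left(i,j\right)$-th entry is the $\left(j,i\right)$-th cofactor of $A$ (that is, $\left(-1\right)^{i+j}$ times the determinant of the $\left(n-1\right)\times\left(n-1\right)$-matrix obtained from $A$ by deleting its $j$-th row and $i$-th column). The key fact I would invoke is the identity
\[
\left(\operatorname{adj} A\right) \cdot A = \det A \cdot I_n ,
\]
where $I_n$ denotes the $n \times n$ identity matrix. This identity is nothing but the Laplace cofactor expansion of $\det A$ along a column, repackaged in matrix form, and it holds over any commutative ring $\mathbb{K}$ (for instance by the method of universal identities, reducing to the case of a field or of a polynomial ring in the entries of $A$, or by a direct cofactor computation).

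Granting this identity, the lemma follows in essentially one line. Assume $A v = 0_{n\times 1}$. Left-multiplying this equation by $\operatorname{adj} A$ and using associativity of matrix multiplication, I obtain
\[
\left(\left(\operatorname{adj} A\right) \cdot A\right) v = \left(\operatorname{adj} A\right)\left(A v\right) = \left(\operatorname{adj} A\right) \cdot 0_{n\times 1} = 0_{n\times 1} .
\]
Substituting $\left(\operatorname{adj} A\right) \cdot A = \det A \cdot I_n$ on the far left turns this into $\left(\det A \cdot I_n\right) v = 0_{n\times 1}$, that is, $\det A \cdot v = 0_{n\times 1}$, which is exactly the asserted conclusion.

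There is essentially no obstacle once the adjugate identity is in hand: the argument is purely formal and uses nothing about $v$ beyond the hypothesis $A v = 0_{n\times 1}$. The one point that genuinely requires care is that $\mathbb{K}$ is only assumed to be a commutative ring, so I may neither divide by $\det A$ nor assume that $A$ is invertible; this is precisely why the adjugate — which exists and satisfies the boxed identity over \emph{any} commutative ring — is the correct tool here, rather than an inverse $A^{-1}$. Consequently, the sole piece of real content is the cofactor-expansion identity $\left(\operatorname{adj} A\right) A = \det A \cdot I_n$, which I would take as known, it being one of the standard fundamental properties of determinants that the paper permits itself to use.
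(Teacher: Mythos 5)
Your proof is correct. The paper does not actually prove this lemma itself --- it invokes it as a standard fact, citing \cite[Corollary 5.85]{PRI2015} in a footnote --- and the adjugate argument you give, via $\left(\operatorname{adj}A\right)A=\det A\cdot I_{n}$ over an arbitrary commutative ring, is precisely the standard proof of that cited result, so your proposal matches the intended reasoning and correctly avoids any appeal to invertibility of $A$ or of $\det A$.
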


\begin{lemma}
\label{lem.det.ain=0}Let $n$ be a positive integer. Let $A=\left(
a_{i,j}\right)  _{1\leq i\leq n,\ 1\leq j\leq n}$ be an $n\times n$-matrix.
Assume that%
\begin{equation}
a_{i,n}=0\ \ \ \ \ \ \ \ \ \ \text{for every }i\in\left\{  1,2,\ldots
,n-1\right\}  . \label{eq.cor.laplace.pre.col.ass}%
\end{equation}
Then, $\det A=a_{n,n}\cdot\det\left(  \left(  a_{i,j}\right)  _{1\leq i\leq
n-1,\ 1\leq j\leq n-1}\right)  $.
\end{lemma}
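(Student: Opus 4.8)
The plan is to prove this lemma straight from the definition of the determinant as a signed sum over permutations, namely $\det A = \sum_{\sigma \in S_n} \operatorname{sgn}\left(\sigma\right) \prod_{i=1}^{n} a_{i,\sigma\left(i\right)}$, and to exploit the fact that the hypothesis \eqref{eq.cor.laplace.pre.col.ass} forces most of the terms to vanish. The first step is to show that every permutation $\sigma \in S_n$ with $\sigma\left(n\right) \neq n$ contributes $0$ to this sum. Indeed, if $\sigma\left(n\right) \neq n$, then (since $\sigma$ is a bijection) the unique index $i \in \left\{1,2,\ldots,n\right\}$ with $\sigma\left(i\right) = n$ satisfies $i \neq n$, hence $i \in \left\{1,2,\ldots,n-1\right\}$; but then the factor $a_{i,\sigma\left(i\right)} = a_{i,n}$ of the product $\prod_{i=1}^{n} a_{i,\sigma\left(i\right)}$ vanishes by \eqref{eq.cor.laplace.pre.col.ass}, so the whole product is $0$. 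Thus the sum collapses to a sum over only the permutations $\sigma \in S_n$ satisfying $\sigma\left(n\right) = n$.

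Next, I would invoke the standard sign-preserving correspondence between these permutations and $S_{n-1}$: every $\sigma \in S_n$ with $\sigma\left(n\right) = n$ restricts to a permutation $\tau \in S_{n-1}$ of $\left\{1,2,\ldots,n-1\right\}$, this restriction is a bijection, and it satisfies $\operatorname{sgn}\left(\sigma\right) = \operatorname{sgn}\left(\tau\right)$. In each surviving term I would then split off the factor $a_{n,\sigma\left(n\right)} = a_{n,n}$, so that $\prod_{i=1}^{n} a_{i,\sigma\left(i\right)} = a_{n,n} \prod_{i=1}^{n-1} a_{i,\tau\left(i\right)}$. Pulling $a_{n,n}$ out of the sum and re-indexing over $\tau \in S_{n-1}$ turns the remaining sum into $a_{n,n} \sum_{\tau \in S_{n-1}} \operatorname{sgn}\left(\tau\right) \prod_{i=1}^{n-1} a_{i,\tau\left(i\right)}$, which is exactly $a_{n,n} \cdot \det\left(\left(a_{i,j}\right)_{1\leq i\leq n-1,\ 1\leq j\leq n-1}\right)$ by the Leibniz formula applied to the top-left $\left(n-1\right)\times\left(n-1\right)$ block.

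The computations here are entirely routine, so the only point deserving real care — the main obstacle, such as it is — is the sign-preservation claim $\operatorname{sgn}\left(\sigma\right) = \operatorname{sgn}\left(\tau\right)$ for the restriction. I would either cite it as a standard property of permutations, or verify it directly by noting that the inversions of $\sigma$ coincide with those of $\tau$, since the fixed point $n$ is the largest element and hence creates no inversions of its own. Alternatively, the whole lemma follows in a single line from the Laplace cofactor expansion along the last column, if one is willing to assume that theorem; I prefer the permutation-sum route sketched above, since it rests only on the definition of the determinant and keeps the argument self-contained.
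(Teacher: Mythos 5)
Your proof is correct, but there is nothing in the paper to compare it against: the paper never proves Lemma \ref{lem.det.ain=0}. It is quoted as a standard fact, with a footnote pointing to \cite[Corollary 5.44]{PRI2015}, and (unlike Lemma \ref{lem.det.rowmult} or Lemma \ref{lem.supergen.sum1}) it receives no proof in the Appendix. Your Leibniz-formula argument is exactly the kind of self-contained proof the paper delegates to its reference, and each step holds up: the hypothesis (\ref{eq.cor.laplace.pre.col.ass}) kills every term with $\sigma\left(n\right)\neq n$, since the unique $i$ with $\sigma\left(i\right)=n$ then lies in $\left\{1,2,\ldots,n-1\right\}$; the restriction map $\sigma\mapsto\tau$ is a bijection from $\left\{\sigma\in S_{n}\ \mid\ \sigma\left(n\right)=n\right\}$ onto $S_{n-1}$; and your justification of sign-preservation is the right one, since a pair $\left(i,n\right)$ with $i<n$ can never be an inversion of such a $\sigma$ (that would require $\sigma\left(i\right)>\sigma\left(n\right)=n$), so the inversions of $\sigma$ and of $\tau$ coincide. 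This is also in the spirit of the paper, which proves its other determinant lemmas (Lemma \ref{lem.det.rowmult}, Lemma \ref{lem.supergen.sum1}) directly from the permutation-sum definition rather than from Laplace expansion, so your preference for the permutation-sum route over the one-line cofactor argument matches the paper's conventions.
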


Now, we can prove the crucial property of the matrix $Z_{f}$:

\begin{proposition}
\label{prop.Zf.n-pot}Let $n$ be a positive integer. Let $f:\left\{
1,2,\ldots,n\right\}  \rightarrow\left\{  1,2,\ldots,n\right\}  $ be a map
satisfying $f\left(  n\right)  =n$.

\textbf{(a)} If $f$ is $n$-potent, then $\det\left(  Z_{f}\right)  =1$.

\textbf{(b)} If $f$ is not $n$-potent, then $\det\left(  Z_{f}\right)  =0$.
\end{proposition}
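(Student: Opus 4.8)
The plan is to prove the two parts by entirely different mechanisms, each tailored to one of the combinatorial facts already in place. For part \textbf{(a)}, I would expand the determinant by the Leibniz formula, $\det\left(Z_f\right) = \sum_{\sigma \in S_n} \operatorname{sgn}\left(\sigma\right) \prod_{i=1}^n \left(Z_f\right)_{i,\sigma\left(i\right)}$, and show that only the identity permutation survives. The crucial observation is that the entry $\left(Z_f\right)_{i,\sigma\left(i\right)} = \delta_{i,\sigma\left(i\right)} - \left(1 - \delta_{i,n}\right)\delta_{f\left(i\right),\sigma\left(i\right)}$ vanishes whenever $\sigma\left(i\right) \notin \left\{i, f\left(i\right)\right\}$ (both Kronecker deltas are then $0$). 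Hence any $\sigma$ contributing a nonzero product must satisfy $\sigma\left(i\right) \in \left\{i, f\left(i\right)\right\}$ for all $i$; but Lemma \ref{lem.n-potent.sigma} tells us that for $n$-potent $f$ the unique such permutation is $\sigma = \operatorname{id}$. It then remains only to evaluate the identity term: since $n$-potency forces $f\left(i\right) \neq i$ for every $i < n$ (a fixed point below $n$ could never reach $n$ under iteration), every diagonal entry $\left(Z_f\right)_{i,i}$ equals $1$, so the identity contributes $\operatorname{sgn}\left(\operatorname{id}\right) \cdot 1 = 1$, giving $\det\left(Z_f\right) = 1$.

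For part \textbf{(b)}, I would instead exploit the explicit kernel vector supplied by Proposition \ref{prop.n-potent.Zv}, which gives $Z_f v_f = 0_{n\times 1}$. Lemma \ref{lem.adj.kernel} upgrades this to $\det\left(Z_f\right) \cdot v_f = 0_{n\times 1}$, i.e. $\det\left(Z_f\right)$ annihilates each coordinate of $v_f$. The point is then to locate a coordinate of $v_f$ that equals $1$: because $f$ is \emph{not} $n$-potent, Proposition \ref{prop.n-potent.calc} yields $f^{n-1}\left(\left\{1,2,\ldots,n\right\}\right) \neq \left\{n\right\}$, so some index $i$ satisfies $f^{n-1}\left(i\right) \neq n$, whence the $i$-th entry $1 - \delta_{f^{n-1}\left(i\right),n}$ of $v_f$ equals $1$. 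Reading off that coordinate of the equation $\det\left(Z_f\right)\cdot v_f = 0_{n\times 1}$ gives $\det\left(Z_f\right) \cdot 1 = 0$, that is, $\det\left(Z_f\right) = 0$.

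The main obstacle is the collapse in part \textbf{(a)}: the whole argument rests on recognizing that Lemma \ref{lem.n-potent.sigma} eliminates every non-identity permutation from the Leibniz sum, which is plainly why that lemma was set up. Part \textbf{(b)} is then essentially bookkeeping, the one genuine subtlety being that we must extract a true unit (a literal $1$) from $v_f$, rather than merely a ``nonzero-looking'' kernel vector, so that $\det\left(Z_f\right)\cdot 1 = 0$ forces $\det\left(Z_f\right) = 0$ over an arbitrary commutative ring $\mathbb{K}$. (As a sanity check on the split, note that for $n$-potent $f$ one has $v_f = 0_{n\times 1}$, so Proposition \ref{prop.n-potent.Zv} is vacuous there and part \textbf{(a)} genuinely needs the separate permutation-expansion argument.)
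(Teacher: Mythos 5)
Your proposal is correct and follows essentially the same route as the paper: part \textbf{(a)} by the Leibniz expansion, using Lemma \ref{lem.n-potent.sigma} to kill every non-identity permutation and $n$-potency (no fixed points below $n$) to evaluate the diagonal term as $1$; part \textbf{(b)} by combining Proposition \ref{prop.n-potent.Zv} with Lemma \ref{lem.adj.kernel} and reading off a coordinate of $v_f$ equal to $1$. The only cosmetic difference is that you invoke Proposition \ref{prop.n-potent.calc} to produce an index $i$ with $f^{n-1}\left(i\right)\neq n$, while the paper gets the same index from Proposition \ref{prop.map.preim} via a short contradiction argument.
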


\begin{proof}
[Proof of Proposition \ref{prop.Zf.n-pot}.]Write the matrix $Z_{f}$ in the
form $\left(  z_{i,j}\right)  _{1\leq i\leq n,\ 1\leq j\leq n}$. Thus,%
\[
\left(  z_{i,j}\right)  _{1\leq i\leq n,\ 1\leq j\leq n}=Z_{f}=\left(
\delta_{i,j}-\left(  1-\delta_{i,n}\right)  \delta_{f\left(  i\right)
,j}\right)  _{1\leq i\leq n,\ 1\leq j\leq n}.
\]
Hence, every $\left(  i,j\right)  \in\left\{  1,2,\ldots,n\right\}  ^{2}$
satisfies%
\begin{align}
z_{i,j}  &  =\delta_{i,j}-\underbrace{\left(  1-\delta_{i,n}\right)  }_{=%
\begin{cases}
1, & \text{if }i<n;\\
0, & \text{if }i=n
\end{cases}
}\delta_{f\left(  i\right)  ,j}=\delta_{i,j}-%
\begin{cases}
1, & \text{if }i<n;\\
0, & \text{if }i=n
\end{cases}
\delta_{f\left(  i\right)  ,j}\label{pf.prop.Zf.n-pot.zij==}\\
&  =\delta_{i,j}-%
\begin{cases}
\delta_{f\left(  i\right)  ,j}, & \text{if }i<n;\\
0, & \text{if }i=n
\end{cases}
=%
\begin{cases}
\delta_{i,j}-\delta_{f\left(  i\right)  ,j}, & \text{if }i<n;\\
\delta_{i,j}, & \text{if }i=n
\end{cases}
. \label{pf.prop.Zf.n-pot.zij=}%
\end{align}

\textbf{(a)} Assume that $f$ is $n$-potent.

Let $\sigma\in S_{n}$ be a permutation such that $\sigma\neq\operatorname*{id}%
$. Then, there exists some $i\in\left\{  1,2,\ldots,n\right\}  $ such that
$\sigma\left(  i\right)  \notin\left\{  i,f\left(  i\right)  \right\}  $ (by
Lemma \ref{lem.n-potent.sigma}). Hence, there exists some $i\in\left\{
1,2,\ldots,n\right\}  $ such that $z_{i,\sigma\left(  i\right)  }%
=0$\ \ \ \ \footnote{\textit{Proof.} We have just shown that there exists some
$i\in\left\{  1,2,\ldots,n\right\}  $ such that $\sigma\left(  i\right)
\notin\left\{  i,f\left(  i\right)  \right\}  $. Consider this $i$. We have
$\sigma\left(  i\right)  \notin\left\{  i,f\left(  i\right)  \right\}  $, thus
$\sigma\left(  i\right)  \neq i$, and thus $\delta_{i,\sigma\left(  i\right)
}=0$. Also, $\sigma\left(  i\right)  \notin\left\{  i,f\left(  i\right)
\right\}  $, thus $\sigma\left(  i\right)  \neq f\left(  i\right)  $, and thus
$\delta_{f\left(  i\right)  ,\sigma\left(  i\right)  }=0$. Now,
(\ref{pf.prop.Zf.n-pot.zij==}) (applied to $\left(  i,\sigma\left(  i\right)
\right)  $ instead of $\left(  i,j\right)  $) yields%
\[
z_{i,\sigma\left(  i\right)  }=\underbrace{\delta_{i,\sigma\left(  i\right)
}}_{=0}-%
\begin{cases}
1, & \text{if }i<n;\\
0, & \text{if }i=n
\end{cases}
\underbrace{\delta_{f\left(  i\right)  ,\sigma\left(  i\right)  }}%
_{=0}=0-0=0,
\]
qed.}. Hence, the product $\prod\limits_{i=1}^{n}z_{i,\sigma\left(  i\right)
}$ has at least one zero factor, and thus equals $0$.

Now, forget that we fixed $\sigma$. We thus have shown that
\begin{equation}
\prod\limits_{i=1}^{n}z_{i,\sigma\left(  i\right)  }%
=0\ \ \ \ \ \ \ \ \ \ \text{for every }\sigma\in S_{n}\text{ such that }%
\sigma\neq\operatorname*{id}. \label{pf.thm.chio-gen.alg.b.almost}%
\end{equation}
On the other hand, it is easy to see that%
\begin{equation}
\prod_{i=1}^{n}z_{i,i}=1. \label{pf.thm.chio-gen.alg.b.almost2}%
\end{equation}
\footnote{\textit{Proof of (\ref{pf.thm.chio-gen.alg.b.almost2}):} To prove
this, it is sufficient to show that $z_{i,i}=1$ for every $i\in\left\{
1,2,\ldots,n\right\}  $. This is obvious when $i=n$ (using the formula
(\ref{pf.prop.Zf.n-pot.zij=})), so we only need to consider the case when
$i<n$. In this case, (\ref{pf.prop.Zf.n-pot.zij=}) (applied to $\left(
i,i\right)  $ instead of $\left(  i,j\right)  $) shows that $z_{i,i}%
=\underbrace{\delta_{i,i}}_{=1}-\delta_{f\left(  i\right)  ,i}=1-\delta
_{f\left(  i\right)  ,i}$. Hence, in order to prove that $z_{i,i}=1$, we need
to show that $\delta_{f\left(  i\right)  ,i}=0$. In other words, we need to
prove that $f\left(  i\right)  \neq i$.
\par
Indeed, assume the contrary. Thus, $f\left(  i\right)  =i$. Hence, by
induction over $k$, we can easily see that $f^{k}\left(  i\right)  =i$ for
every $k\in\mathbb{N}$. Hence, for every $k\in\mathbb{N}$, we have
$f^{k}\left(  i\right)  =i\neq n$. This contradicts the fact that there exists
some $k\in\mathbb{N}$ such that $f^{k}\left(  i\right)  =n$ (since $f$ is
$n$-potent). This contradiction proves that our assumption was wrong. Hence,
(\ref{pf.thm.chio-gen.alg.b.almost2}) is proven.}

Now, the definition of $\det\left(  Z_{f}\right)  $ yields%
\begin{align*}
\det\left(  Z_{f}\right)   &  =\sum_{\sigma\in S_{n}}\left(  -1\right)
^{\sigma}\prod_{i=1}^{n}z_{i,\sigma\left(  i\right)  }%
\ \ \ \ \ \ \ \ \ \ \left(  \text{since }Z_{f}=\left(  z_{i,j}\right)  _{1\leq
i\leq n,\ 1\leq j\leq n}\right) \\
&  =\underbrace{\left(  -1\right)  ^{\operatorname*{id}}}_{=1}\prod_{i=1}%
^{n}\underbrace{z_{i,\operatorname*{id}\left(  i\right)  }}_{=z_{i,i}}%
+\sum_{\substack{\sigma\in S_{n};\\\sigma\neq\operatorname*{id}}}\left(
-1\right)  ^{\sigma}\underbrace{\prod\limits_{i=1}^{n}z_{i,\sigma\left(
i\right)  }}_{\substack{=0\\\text{(by (\ref{pf.thm.chio-gen.alg.b.almost}))}%
}}\\
&  =\prod_{i=1}^{n}z_{i,i}+\underbrace{\sum_{\substack{\sigma\in
S_{n};\\\sigma\neq\operatorname*{id}}}\left(  -1\right)  ^{\sigma}0}%
_{=0}=\prod_{i=1}^{n}z_{i,i}=1\ \ \ \ \ \ \ \ \ \ \left(  \text{by
(\ref{pf.thm.chio-gen.alg.b.almost2})}\right)  .
\end{align*}
This proves Proposition \ref{prop.Zf.n-pot} \textbf{(a)}.

\textbf{(b)} Assume that $f$ is not $n$-potent. Then, there exists some
$i\in\left\{  1,2,\ldots,n\right\}  $ such that $f^{n-1}\left(  i\right)  \neq
n$\ \ \ \ \footnote{\textit{Proof.} Assume the contrary. Thus, for every
$i\in\left\{  1,2,\ldots,n\right\}  $, we have $f^{n-1}\left(  i\right)  =n$.
Hence, for every $i\in\left\{  1,2,\ldots,n\right\}  $, there exists some
$k\in\mathbb{N}$ such that $f^{k}\left(  i\right)  =n$ (namely, $k=n-1$). In
other words, the map $f$ is $n$-potent. This contradicts the fact that $f$ is
not $n$-potent. This contradiction shows that our assumption was wrong, qed.}.
Fix such an $i$, and denote it by $u$. Thus, $u\in\left\{  1,2,\ldots
,n\right\}  $ is such that $f^{n-1}\left(  u\right)  \neq n$.

Define the vector $v_{f}$ as in Proposition \ref{prop.n-potent.Zv}.
Proposition \ref{prop.n-potent.Zv} yields $Z_{f}v_{f}=0_{n\times1}$. Lemma
\ref{lem.adj.kernel} (applied to $Z_{f}$ and $v_{f}$ instead of $A$ and $v$)
thus yields $\det\left(  Z_{f}\right)  \cdot v_{f}=0_{n\times1}$. Thus,%
\begin{align*}
\left(  0\right)  _{1\leq i\leq n,\ 1\leq j\leq1}  &  =0_{n\times1}%
=\det\left(  Z_{f}\right)  \cdot\underbrace{v_{f}}_{=\left(  1-\delta
_{f^{n-1}\left(  i\right)  ,n}\right)  _{1\leq i\leq n,\ 1\leq j\leq1}}\\
&  =\det\left(  Z_{f}\right)  \cdot\left(  1-\delta_{f^{n-1}\left(  i\right)
,n}\right)  _{1\leq i\leq n,\ 1\leq j\leq1}\\
&  =\left(  \det\left(  Z_{f}\right)  \cdot\left(  1-\delta_{f^{n-1}\left(
i\right)  ,n}\right)  \right)  _{1\leq i\leq n,\ 1\leq j\leq1}.
\end{align*}
In other words, $0=\det\left(  Z_{f}\right)  \cdot\left(  1-\delta
_{f^{n-1}\left(  i\right)  ,n}\right)  $ for each $i\in\left\{  1,2,\ldots
,n\right\}  $. Applying this to $i=u$, we obtain%
\[
0=\det\left(  Z_{f}\right)  \cdot\left(  1-\underbrace{\delta_{f^{n-1}\left(
u\right)  ,n}}_{\substack{=0\\\text{(since }f^{n-1}\left(  u\right)  \neq
n\text{)}}}\right)  =\det\left(  Z_{f}\right)  \cdot1=\det\left(
Z_{f}\right)  .
\]
This proves Proposition \ref{prop.Zf.n-pot} \textbf{(b)}.
\end{proof}

\subsection{Proof of Theorem \ref{thm.chio-gen}}

Let us finally recall a particularly basic property of determinants:

\begin{lemma}
\label{lem.det.rowmult}Let $m\in\mathbb{N}$. Let $A=\left(  a_{i,j}\right)
_{1\leq i\leq m,\ 1\leq j\leq m}\in\mathbb{K}^{m\times m}$ be an $m\times
m$-matrix. Let $b_{1},b_{2},\ldots,b_{m}$ be $m$ elements of $\mathbb{K}$.
Then,%
\[
\det\left(  \left(  b_{i}a_{i,j}\right)  _{1\leq i\leq m,\ 1\leq j\leq
m}\right)  =\left(  \prod_{i=1}^{m}b_{i}\right)  \det A.
\]

\end{lemma}

(Again, see the Appendix for the proof of this lemma.)

We can now finally prove Theorem \ref{thm.chio-gen}:

\begin{proof}
[Proof of Theorem \ref{thm.chio-gen}.]The identities we want to prove (both
for part \textbf{(a)} and for part \textbf{(b)}) are polynomial identities in
the entries of $A$. Thus, we can WLOG assume that all these entries are
invertible.\footnote{Here is a more detailed justification for this
\textquotedblleft WLOG\textquotedblright:
\par
Let us restrict ourselves to Theorem \ref{thm.chio-gen} \textbf{(b)}. (The
argument for Theorem \ref{thm.chio-gen} \textbf{(a)} is analogous.)
\par
Assume that Theorem \ref{thm.chio-gen} \textbf{(b)} is proven in the case when
all entries of $A$ are invertible. We now must show that Theorem
\ref{thm.chio-gen} \textbf{(b)} always holds.
\par
Let $n$ be a positive integer such that $n\geq2$. Let $f:\left\{
1,2,\ldots,n\right\}  \rightarrow\left\{  1,2,\ldots,n\right\}  $ be an
$n$-potent map. Then, Theorem \ref{thm.chio-gen} \textbf{(b)} claims that%
\begin{equation}
\det B=\left(  \operatorname*{abut}\nolimits_{f}A\right)  \cdot\det A
\label{pf.thm.chio-gen.WLOG.5}%
\end{equation}
for every $n\times n$-matrix $A=\left(  a_{i,j}\right)  _{1\leq i\leq
n,\ 1\leq j\leq n}\in\mathbb{K}^{n\times n}$, where $B$ is as defined in
Theorem \ref{thm.chio-gen}. The equality (\ref{pf.thm.chio-gen.WLOG.5})
rewrites as
\begin{align}
&  \sum_{\sigma\in S_{n-1}}\left(  -1\right)  ^{\sigma}\prod_{i=1}%
^{n-1}\left(  a_{i,\sigma\left(  i\right)  }a_{f\left(  i\right)  ,n}%
-a_{i,n}a_{f\left(  i\right)  ,\sigma\left(  i\right)  }\right) \nonumber\\
&  =\left(  a_{n,n}^{\left\vert f^{-1}\left(  n\right)  \right\vert -2}%
\prod_{\substack{i\in\left\{  1,2,\ldots,n-1\right\}  ;\\f\left(  i\right)
\neq n}}a_{f\left(  i\right)  ,n}\right)  \cdot\sum_{\sigma\in S_{n}}\left(
-1\right)  ^{\sigma}\prod_{i=1}^{n}a_{i,\sigma\left(  i\right)  }
\label{pf.thm.chio-gen.WLOG.6}%
\end{align}
(because we have
\begin{align*}
\det\underbrace{B}_{=\left(  a_{i,j}a_{f\left(  i\right)  ,n}-a_{i,n}%
a_{f\left(  i\right)  ,j}\right)  _{1\leq i\leq n-1,\ 1\leq j\leq n-1}}  &
=\det\left(  \left(  a_{i,j}a_{f\left(  i\right)  ,n}-a_{i,n}a_{f\left(
i\right)  ,j}\right)  _{1\leq i\leq n-1,\ 1\leq j\leq n-1}\right) \\
&  =\sum_{\sigma\in S_{n-1}}\left(  -1\right)  ^{\sigma}\prod_{i=1}%
^{n-1}\left(  a_{i,\sigma\left(  i\right)  }a_{f\left(  i\right)  ,n}%
-a_{i,n}a_{f\left(  i\right)  ,\sigma\left(  i\right)  }\right)
\end{align*}
and $\operatorname*{abut}\nolimits_{f}A=a_{n,n}^{\left\vert f^{-1}\left(
n\right)  \right\vert -2}\prod_{\substack{i\in\left\{  1,2,\ldots,n-1\right\}
;\\f\left(  i\right)  \neq n}}a_{f\left(  i\right)  ,n}$ and $\det
A=\sum_{\sigma\in S_{n}}\left(  -1\right)  ^{\sigma}\prod_{i=1}^{n}%
a_{i,\sigma\left(  i\right)  }$). Thus, Theorem \ref{thm.chio-gen}
\textbf{(b)} (for our given $n$ and $f$) is equivalent to the claim that
(\ref{pf.thm.chio-gen.WLOG.6}) holds for every $n\times n$-matrix $\left(
a_{i,j}\right)  _{1\leq i\leq n,\ 1\leq j\leq n}\in\mathbb{K}^{n\times n}$.
\par
Now, let $\mathbb{P}$ be the polynomial ring $\mathbb{Z}\left[  X_{i,j}%
\ \mid\ \left(  i,j\right)  \in\left\{  1,2,\ldots,n\right\}  ^{2}\right]  $
in the $n^{2}$ indeterminates $X_{i,j}$ for $\left(  i,j\right)  \in\left\{
1,2,\ldots,n\right\}  ^{2}$. Let $\mathbb{F}$ be the quotient field of
$\mathbb{P}$; this is the field $\mathbb{Q}\left(  X_{i,j}\ \mid\ \left(
i,j\right)  \in\left\{  1,2,\ldots,n\right\}  ^{2}\right)  $ of rational
functions in the same indeterminates (but over $\mathbb{Q}$).
\par
Let $A_{X}$ be the $n\times n$-matrix $\left(  X_{i,j}\right)  _{1\leq i\leq
n,\ 1\leq j\leq n}\in\mathbb{P}^{n\times n}$. If we regard $A_{X}$ as a matrix
in $\mathbb{F}^{n\times n}$, then all entries of $A_{X}$ are invertible
(because they are nonzero elements of the field $\mathbb{F}$). Hence, Theorem
\ref{thm.chio-gen} \textbf{(b)} can be applied to $\mathbb{F}$, $A_{X}$,
$X_{i,j}$ and $B_{X}$ instead of $\mathbb{K}$, $A$, $a_{i,j}$ and $B$ (because
we have assumed that Theorem \ref{thm.chio-gen} \textbf{(b)} is proven in the
case when all entries of $A$ are invertible). As we know, this means that
(\ref{pf.thm.chio-gen.WLOG.6}) holds for $a_{i,j}=X_{i,j}$. In other words, we
have%
\begin{align}
&  \sum_{\sigma\in S_{n-1}}\left(  -1\right)  ^{\sigma}\prod_{i=1}%
^{n-1}\left(  X_{i,\sigma\left(  i\right)  }X_{f\left(  i\right)  ,n}%
-X_{i,n}X_{f\left(  i\right)  ,\sigma\left(  i\right)  }\right) \nonumber\\
&  =\left(  X_{n,n}^{\left\vert f^{-1}\left(  n\right)  \right\vert -2}%
\prod_{\substack{i\in\left\{  1,2,\ldots,n-1\right\}  ;\\f\left(  i\right)
\neq n}}X_{f\left(  i\right)  ,n}\right)  \cdot\sum_{\sigma\in S_{n}}\left(
-1\right)  ^{\sigma}\prod_{i=1}^{n}X_{i,\sigma\left(  i\right)  }.
\label{pf.thm.chio-gen.WLOG.7}%
\end{align}
\par
Now, let $\left(  a_{i,j}\right)  _{1\leq i\leq n,\ 1\leq j\leq n}%
\in\mathbb{K}^{n\times n}$ be an $n\times n$-matrix. The equality
(\ref{pf.thm.chio-gen.WLOG.7}) is an identity between polynomials in the
polynomial ring $\mathbb{P}$. Thus, we can substitute $a_{i,j}$ for each
$X_{i,j}$ in this equality. As a result, we obtain the equality
(\ref{pf.thm.chio-gen.WLOG.6}).
\par
Thus we have shown that (\ref{pf.thm.chio-gen.WLOG.6}) holds for every
$n\times n$-matrix $\left(  a_{i,j}\right)  _{1\leq i\leq n,\ 1\leq j\leq
n}\in\mathbb{K}^{n\times n}$. As we have already explained, this is just a
restatement of Theorem \ref{thm.chio-gen} \textbf{(b)}; hence, Theorem
\ref{thm.chio-gen} \textbf{(b)} is proven in full generality.
\par
(The justification above is a typical use of the \textquotedblleft method of
universal identities\textquotedblright. See \cite{Conrad09} for examples of
similar justifications, albeit used in different settings.)} In other words,
we can assume that $a_{i,j}$ is invertible for each $\left(  i,j\right)
\in\left\{  1,2,\ldots,n\right\}  ^{2}$. Assume this.

Let $C$ be the $\left(  n-1\right)  \times\left(  n-1\right)  $-matrix%
\[
\left(  \dfrac{a_{i,j}}{a_{i,n}}-\dfrac{a_{f\left(  i\right)  ,j}}{a_{f\left(
i\right)  ,n}}\right)  _{1\leq i\leq n-1,\ 1\leq j\leq n-1}\in\mathbb{K}%
^{\left(  n-1\right)  \times\left(  n-1\right)  }.
\]
Thus, Lemma \ref{lem.det.rowmult} (applied to $n-1$, $C$, $\dfrac{a_{i,j}%
}{a_{i,n}}-\dfrac{a_{f\left(  i\right)  ,j}}{a_{f\left(  i\right)  ,n}}$ and
$a_{i,n}a_{f\left(  i\right)  ,n}$ instead of $m$, $A$, $a_{i,j}$ and $b_{i}$)
yields%
\begin{align*}
&  \det\left(  \left(  a_{i,n}a_{f\left(  i\right)  ,n}\left(  \dfrac{a_{i,j}%
}{a_{i,n}}-\dfrac{a_{f\left(  i\right)  ,j}}{a_{f\left(  i\right)  ,n}%
}\right)  \right)  _{1\leq i\leq n-1,\ 1\leq j\leq n-1}\right) \\
&  =\left(  \prod_{i=1}^{n-1}\left(  a_{i,n}a_{f\left(  i\right)  ,n}\right)
\right)  \det C.
\end{align*}
Comparing this with%
\begin{align*}
&  \det\left(  \left(  \underbrace{a_{i,n}a_{f\left(  i\right)  ,n}\left(
\dfrac{a_{i,j}}{a_{i,n}}-\dfrac{a_{f\left(  i\right)  ,j}}{a_{f\left(
i\right)  ,n}}\right)  }_{=a_{i,j}a_{f\left(  i\right)  ,n}-a_{i,n}a_{f\left(
i\right)  ,j}}\right)  _{1\leq i\leq n-1,\ 1\leq j\leq n-1}\right) \\
&  =\det\left(  \underbrace{\left(  a_{i,j}a_{f\left(  i\right)  ,n}%
-a_{i,n}a_{f\left(  i\right)  ,j}\right)  _{1\leq i\leq n-1,\ 1\leq j\leq
n-1}}_{=B}\right)  =\det B,
\end{align*}
we find%
\begin{equation}
\det B=\left(  \prod_{i=1}^{n-1}\left(  a_{i,n}a_{f\left(  i\right)
,n}\right)  \right)  \det C. \label{pf.thm.chio-gen.alg.BfromC}%
\end{equation}
It remains to compute $\det C$.

For every $\left(  i,j\right)  \in\left\{  1,2,\ldots,n\right\}  ^{2}$, define
an element $d_{i,j}\in\mathbb{K}$ by%
\[
d_{i,j}=%
\begin{cases}
\dfrac{a_{i,j}}{a_{i,n}}-\dfrac{a_{f\left(  i\right)  ,j}}{a_{f\left(
i\right)  ,n}}, & \text{if }i<n;\\
\dfrac{a_{i,j}}{a_{i,n}}, & \text{if }i=n
\end{cases}
.
\]
For every $i\in\left\{  1,2,\ldots,n-1\right\}  $, the definition of $d_{i,n}$
yields%
\begin{align*}
d_{i,n}  &  =%
\begin{cases}
\dfrac{a_{i,n}}{a_{i,n}}-\dfrac{a_{f\left(  i\right)  ,n}}{a_{f\left(
i\right)  ,n}}, & \text{if }i<n;\\
\dfrac{a_{i,n}}{a_{i,n}}, & \text{if }i=n
\end{cases}
=\underbrace{\dfrac{a_{i,n}}{a_{i,n}}}_{=1}-\underbrace{\dfrac{a_{f\left(
i\right)  ,n}}{a_{f\left(  i\right)  ,n}}}_{=1}\ \ \ \ \ \ \ \ \ \ \left(
\text{since }i<n\right) \\
&  =1-1=0.
\end{align*}
Moreover, the definition of $d_{n,n}$ yields%
\begin{align*}
d_{n,n}  &  =%
\begin{cases}
\dfrac{a_{n,n}}{a_{n,n}}-\dfrac{a_{f\left(  n\right)  ,n}}{a_{f\left(
n\right)  ,n}}, & \text{if }n<n;\\
\dfrac{a_{n,n}}{a_{n,n}}, & \text{if }n=n
\end{cases}
=\dfrac{a_{n,n}}{a_{n,n}}\ \ \ \ \ \ \ \ \ \ \left(  \text{since }n=n\right)
\\
&  =1.
\end{align*}
Finally, every $i\in\left\{  1,2,\ldots,n-1\right\}  $ and $j\in\left\{
1,2,\ldots,n\right\}  $ satisfy%
\begin{equation}
d_{i,j}=%
\begin{cases}
\dfrac{a_{i,j}}{a_{i,n}}-\dfrac{a_{f\left(  i\right)  ,j}}{a_{f\left(
i\right)  ,n}}, & \text{if }i<n;\\
\dfrac{a_{i,j}}{a_{i,n}}, & \text{if }i=n
\end{cases}
=\dfrac{a_{i,j}}{a_{i,n}}-\dfrac{a_{f\left(  i\right)  ,j}}{a_{f\left(
i\right)  ,n}} \label{pf.thm.chio-gen.alg.dij3}%
\end{equation}
(since $i<n$).

Now, let $D$ be the $n\times n$-matrix
\[
\left(  d_{i,j}\right)  _{1\leq i\leq n,\ 1\leq j\leq n}\in\mathbb{K}^{n\times
n}.
\]

Recall that $d_{i,n}=0$ for every $i\in\left\{  1,2,\ldots,n-1\right\}  $.
Hence, Lemma \ref{lem.det.ain=0} (applied to $D$ and $d_{i,j}$ instead of $A$
and $a_{i,j}$) shows that%
\begin{align*}
\det D  &  =\underbrace{d_{n,n}}_{=1}\det\left(  \left(  \underbrace{d_{i,j}%
}_{\substack{=\dfrac{a_{i,j}}{a_{i,n}}-\dfrac{a_{f\left(  i\right)  ,j}%
}{a_{f\left(  i\right)  ,n}}\\\text{(by (\ref{pf.thm.chio-gen.alg.dij3}))}%
}}\right)  _{1\leq i\leq n-1,\ 1\leq j\leq n-1}\right) \\
&  =\det\underbrace{\left(  \left(  \dfrac{a_{i,j}}{a_{i,n}}-\dfrac
{a_{f\left(  i\right)  ,j}}{a_{f\left(  i\right)  ,n}}\right)  _{1\leq i\leq
n-1,\ 1\leq j\leq n-1}\right)  }_{=C}=\det C.
\end{align*}
Hence, (\ref{pf.thm.chio-gen.alg.BfromC}) becomes%
\begin{align}
\det B  &  =\left(  \prod_{i=1}^{n-1}\left(  a_{i,n}a_{f\left(  i\right)
,n}\right)  \right)  \underbrace{\det C}_{=\det D}\nonumber\\
&  =\left(  \prod_{i=1}^{n-1}\left(  a_{i,n}a_{f\left(  i\right)  ,n}\right)
\right)  \det D. \label{pf.thm.chio-gen.alg.BfromD}%
\end{align}
Hence, we only need to compute $\det D$. How do we do this?

Let $E$ be the $n\times n$-matrix $\left(  \dfrac{a_{i,j}}{a_{i,n}}\right)
_{1\leq i\leq n,\ 1\leq j\leq n}\in\mathbb{K}^{n\times n}$.

Recall that $A=\left(  a_{i,j}\right)  _{1\leq i\leq n,\ 1\leq j\leq n}$.
Lemma \ref{lem.det.rowmult} (applied to $m=n$ and $b_{i}=\dfrac{1}{a_{i,n}}$)
thus yields%
\[
\det\left(  \left(  \dfrac{1}{a_{i,n}}a_{i,j}\right)  _{1\leq i\leq n,\ 1\leq
j\leq n}\right)  =\left(  \prod_{i=1}^{n}\dfrac{1}{a_{i,n}}\right)  \det A.
\]
Compared with
\[
\det\left(  \left(  \underbrace{\dfrac{1}{a_{i,n}}a_{i,j}}_{=\dfrac{a_{i,j}%
}{a_{i,n}}}\right)  _{1\leq i\leq n,\ 1\leq j\leq n}\right)  =\det\left(
\underbrace{\left(  \dfrac{a_{i,j}}{a_{i,n}}\right)  _{1\leq i\leq n,\ 1\leq
j\leq n}}_{=E}\right)  =\det E,
\]
this yields%
\begin{equation}
\det E=\left(  \prod_{i=1}^{n}\dfrac{1}{a_{i,n}}\right)  \det A.
\label{pf.thm.chio-gen.alg.AfromE}%
\end{equation}

On the other hand, recall that we have defined an $n\times n$-matrix $Z_{f}$
in Definition \ref{def.Zf}. We now claim that%
\begin{equation}
D=Z_{f}E. \label{pf.thm.chio-gen.D=ZfE}%
\end{equation}

\textit{Proof of (\ref{pf.thm.chio-gen.D=ZfE}):} We have $Z_{f}=\left(
\delta_{i,j}-\left(  1-\delta_{i,n}\right)  \delta_{f\left(  i\right)
,j}\right)  _{1\leq i\leq n,\ 1\leq j\leq n}$ and \newline$E=\left(
\dfrac{a_{i,j}}{a_{i,n}}\right)  _{1\leq i\leq n,\ 1\leq j\leq n}$. Thus, the
definition of the product of two matrices yields%
\[
Z_{f}E=\left(  \sum_{k=1}^{n}\left(  \delta_{i,k}-\left(  1-\delta
_{i,n}\right)  \delta_{f\left(  i\right)  ,k}\right)  \dfrac{a_{k,j}}{a_{k,n}%
}\right)  _{1\leq i\leq n,\ 1\leq j\leq n}.
\]
Since every $\left(  i,j\right)  \in\left\{  1,2,\ldots,n\right\}  ^{2}$
satisfies%
\begin{align*}
&  \sum_{k=1}^{n}\left(  \delta_{i,k}-\left(  1-\delta_{i,n}\right)
\delta_{f\left(  i\right)  ,k}\right)  \dfrac{a_{k,j}}{a_{k,n}}\\
&  =\underbrace{\sum_{k=1}^{n}\delta_{i,k}\dfrac{a_{k,j}}{a_{k,n}}%
}_{\substack{=\dfrac{a_{i,j}}{a_{i,n}}\\\text{(because the factor }%
\delta_{i,k}\text{ in the sum}\\\text{kills every addend except the one for
}k=i\text{)}}}-\sum_{k=1}^{n}\underbrace{\left(  1-\delta_{i,n}\right)
\delta_{f\left(  i\right)  ,k}\dfrac{a_{k,j}}{a_{k,n}}}_{\substack{=\left(
1-\delta_{i,n}\right)  \dfrac{a_{f\left(  i\right)  ,j}}{a_{f\left(  i\right)
,n}}\\\text{(because the factor }\delta_{f\left(  i\right)  ,k}\text{ in the
sum}\\\text{kills every addend except the one for }k=f\left(  i\right)
\text{)}}}\\
&  =\dfrac{a_{i,j}}{a_{i,n}}-\underbrace{\left(  1-\delta_{i,n}\right)  }_{=%
\begin{cases}
1, & \text{if }i<n;\\
0, & \text{if }i=n
\end{cases}
}\dfrac{a_{f\left(  i\right)  ,j}}{a_{f\left(  i\right)  ,n}}=\dfrac{a_{i,j}%
}{a_{i,n}}-%
\begin{cases}
1, & \text{if }i<n;\\
0, & \text{if }i=n
\end{cases}
\dfrac{a_{f\left(  i\right)  ,j}}{a_{f\left(  i\right)  ,n}}\\
&  =%
\begin{cases}
\dfrac{a_{i,j}}{a_{i,n}}-\dfrac{a_{f\left(  i\right)  ,j}}{a_{f\left(
i\right)  ,n}}, & \text{if }i<n;\\
\dfrac{a_{i,j}}{a_{i,n}}, & \text{if }i=n
\end{cases}
=d_{i,j}\ \ \ \ \ \ \ \ \ \ \left(  \text{by the definition of }%
d_{i,j}\right)  ,
\end{align*}
this rewrites as%
\[
Z_{f}E=\left(  d_{i,j}\right)  _{1\leq i\leq n,\ 1\leq j\leq n}.
\]
Comparing this with $D=\left(  d_{i,j}\right)  _{1\leq i\leq n,\ 1\leq j\leq
n}$, we obtain $D=Z_{f}E$. This proves (\ref{pf.thm.chio-gen.D=ZfE}).

Now, we can prove parts \textbf{(a)} and \textbf{(b)} of Theorem
\ref{thm.chio-gen}:

\textbf{(a)} Assume that the map $f$ is not $n$-potent. Taking determinants on
both sides of (\ref{pf.thm.chio-gen.D=ZfE}), we obtain%
\[
\det D=\det\left(  Z_{f}E\right)  =\underbrace{\det\left(  Z_{f}\right)
}_{\substack{=0\\\text{(by Proposition \ref{prop.Zf.n-pot} \textbf{(b)})}%
}}\cdot\det E=0.
\]
Thus, (\ref{pf.thm.chio-gen.alg.BfromD}) becomes%
\[
\det B=\left(  \prod_{i=1}^{n-1}\left(  a_{i,n}a_{f\left(  i\right)
,n}\right)  \right)  \underbrace{\det D}_{=0}=0.
\]
This proves Theorem \ref{thm.chio-gen} \textbf{(a)}.

\textbf{(b)} Assume that the map $f$ is $n$-potent. Taking determinants on
both sides of (\ref{pf.thm.chio-gen.D=ZfE}), we obtain%
\begin{align*}
\det D  &  =\det\left(  Z_{f}E\right)  =\underbrace{\det\left(  Z_{f}\right)
}_{\substack{=1\\\text{(by Proposition \ref{prop.Zf.n-pot} \textbf{(a)})}%
}}\cdot\det E=\det E\\
&  =\underbrace{\left(  \prod_{i=1}^{n}\dfrac{1}{a_{i,n}}\right)  }_{=\left(
\prod_{i=1}^{n-1}\dfrac{1}{a_{i,n}}\right)  \cdot\dfrac{1}{a_{n,n}}}\det
A\ \ \ \ \ \ \ \ \ \ \left(  \text{by (\ref{pf.thm.chio-gen.alg.AfromE}%
)}\right) \\
&  =\left(  \prod_{i=1}^{n-1}\dfrac{1}{a_{i,n}}\right)  \cdot\dfrac{1}%
{a_{n,n}}\det A.
\end{align*}
Thus, (\ref{pf.thm.chio-gen.alg.BfromD}) becomes%
\begin{align*}
\det B  &  =\left(  \prod_{i=1}^{n-1}\left(  a_{i,n}a_{f\left(  i\right)
,n}\right)  \right)  \underbrace{\det D}_{=\left(  \prod_{i=1}^{n-1}\dfrac
{1}{a_{i,n}}\right)  \cdot\dfrac{1}{a_{n,n}}\det A}\\
&  =\underbrace{\left(  \prod_{i=1}^{n-1}\left(  a_{i,n}a_{f\left(  i\right)
,n}\right)  \right)  \left(  \prod\limits_{i=1}^{n-1}\dfrac{1}{a_{i,n}%
}\right)  }_{=\prod\limits_{i=1}^{n-1}a_{f\left(  i\right)  ,n}=\prod
\limits_{i\in\left\{  1,2,\ldots,n-1\right\}  }a_{f\left(  i\right)  ,n}}%
\cdot\dfrac{1}{a_{n,n}}\det A\\
&  =\underbrace{\left(  \prod\limits_{i\in\left\{  1,2,\ldots,n-1\right\}
}a_{f\left(  i\right)  ,n}\right)  \cdot\dfrac{1}{a_{n,n}}}_{\substack{=\dfrac
{1}{a_{n,n}}\prod\limits_{i\in\left\{  1,2,\ldots,n-1\right\}  }a_{f\left(
i\right)  ,n}=\operatorname*{abut}\nolimits_{f}A\\\text{(by Remark
\ref{rmk.n-potent.abut} \textbf{(a)})}}}\det A=\left(  \operatorname*{abut}%
\nolimits_{f}A\right)  \det A.
\end{align*}
This proves Theorem \ref{thm.chio-gen} \textbf{(b)}.
\end{proof}

\subsection{Further questions}

The above -- rather indirect -- road to the matrix-tree theorem suggests the
following two questions:

\begin{itemize}
\item Is there a combinatorial proof of Theorem \ref{thm.chio-gen}? Or, at
least, is there a \textquotedblleft division-free\textquotedblright\ proof
(i.e., a proof that does not use a WLOG assumption that some of the $a_{i,j}$
are invertible or a similar trick)?

\item Can we similarly obtain some of the various generalizations and variants
of the matrix-tree theorem, such as the all-minors matrix-tree theorem
(\cite[(2)]{Chaiken82} and \cite[Theorem 6]{Sahi13})?
\end{itemize}

\section{\label{sect.app}Appendix: some standard proofs}

For the sake of completeness, let us give some proofs of standard results that
have been used without proof above.

\begin{proof}
[Proof of Remark \ref{rmk.n-potent.atleast1}.]\textbf{(a)} We have $1\neq n$
(since $n\geq2$). But the map $f$ is $n$-potent. Thus, there exists some
$k\in\mathbb{N}$ such that $f^{k}\left(  1\right)  =n$. Let $h$ be the
smallest such $k$. Then, $f^{h}\left(  1\right)  =n$. Hence, $h\neq0$ (since
$f^{h}\left(  1\right)  =n\neq1=f^{0}\left(  1\right)  $). Therefore,
$h-1\in\mathbb{N}$, so that $f^{h-1}\left(  1\right)  \neq n$ (because $h$ is
the \textbf{smallest} $k\in\mathbb{N}$ such that $f^{k}\left(  1\right)  =n$).
Hence, $f^{h-1}\left(  1\right)  \in\left\{  1,2,\ldots,n-1\right\}  $. Thus,
$f^{h-1}\left(  1\right)  $ is a $g\in\left\{  1,2,\ldots,n-1\right\}  $ such
that $f\left(  g\right)  =n$ (since $f\left(  f^{h-1}\left(  1\right)
\right)  =f^{h}\left(  1\right)  =n$). Therefore, such a $g$ exists. This
proves Remark \ref{rmk.n-potent.atleast1} \textbf{(a)}.

\textbf{(b)} The map $f$ is $n$-potent; thus, $f\left(  n\right)  =n$. Hence,
$n\in f^{-1}\left(  n\right)  $. Remark \ref{rmk.n-potent.atleast1}
\textbf{(a)} shows that there exists some $g\in\left\{  1,2,\ldots
,n-1\right\}  $ such that $f\left(  g\right)  =n$. Consider this $g$. From
$f\left(  g\right)  =n$, we obtain $g\in f^{-1}\left(  n\right)  $. From
$g\in\left\{  1,2,\ldots,n-1\right\}  $, we obtain $g\neq n$. Hence, $g$ and
$n$ are two distinct elements of the set $f^{-1}\left(  n\right)  $.
Consequently, $\left\vert f^{-1}\left(  n\right)  \right\vert \geq2$. This
proves Remark \ref{rmk.n-potent.atleast1} \textbf{(b)}.
\end{proof}

\begin{proof}
[Proof of Remark \ref{rmk.n-potent.abut}.]\textbf{(b)} We have $n\in
f^{-1}\left(  n\right)  $ (since $f\left(  n\right)  =n$) and $g\in
f^{-1}\left(  n\right)  $ (since $f\left(  g\right)  =n$). Moreover, $g\neq n$
(since $g\in\left\{  1,2,\ldots,n-1\right\}  $). Hence, $g$ and $n$ are two
distinct elements of $f^{-1}\left(  n\right)  $. Hence, $\left\vert
f^{-1}\left(  n\right)  \setminus\left\{  n,g\right\}  \right\vert =\left\vert
f^{-1}\left(  n\right)  \right\vert -2$. But%
\begin{align*}
&  \left\{  i\in\left\{  1,2,\ldots,n-1\right\}  \setminus\left\{  g\right\}
\ \mid\ f\left(  i\right)  =n\right\} \\
&  =f^{-1}\left(  n\right)  \cap\left(  \left\{  1,2,\ldots,n-1\right\}
\setminus\left\{  g\right\}  \right) \\
&  =\underbrace{f^{-1}\left(  n\right)  \cap\left\{  1,2,\ldots,n-1\right\}
}_{=f^{-1}\left(  n\right)  \setminus\left\{  n\right\}  }\setminus\left\{
g\right\}  =\left(  f^{-1}\left(  n\right)  \setminus\left\{  n\right\}
\right)  \setminus\left\{  g\right\} \\
&  =f^{-1}\left(  n\right)  \setminus\left\{  n,g\right\}
\end{align*}
so that%
\begin{equation}
\left\vert \left\{  i\in\left\{  1,2,\ldots,n-1\right\}  \setminus\left\{
g\right\}  \ \mid\ f\left(  i\right)  =n\right\}  \right\vert =\left\vert
f^{-1}\left(  n\right)  \setminus\left\{  n,g\right\}  \right\vert =\left\vert
f^{-1}\left(  n\right)  \right\vert -2. \label{pf.rmk.n-potent.abut.a.1}%
\end{equation}
Now,%
\begin{align*}
&  \prod_{\substack{i\in\left\{  1,2,\ldots,n-1\right\}  ;\\i\neq
g}}a_{f\left(  i\right)  ,n}\\
&  =\prod_{i\in\left\{  1,2,\ldots,n-1\right\}  \setminus\left\{  g\right\}
}a_{f\left(  i\right)  ,n}=\left(  \prod_{\substack{i\in\left\{
1,2,\ldots,n-1\right\}  \setminus\left\{  g\right\}  ;\\f\left(  i\right)
=n}}\underbrace{a_{f\left(  i\right)  ,n}}_{\substack{=a_{n,n}\\\text{(since
}f\left(  i\right)  =n\text{)}}}\right)  \left(  \prod_{\substack{i\in\left\{
1,2,\ldots,n-1\right\}  \setminus\left\{  g\right\}  ;\\f\left(  i\right)
\neq n}}a_{f\left(  i\right)  ,n}\right) \\
&  =\underbrace{\left(  \prod_{\substack{i\in\left\{  1,2,\ldots,n-1\right\}
\setminus\left\{  g\right\}  ;\\f\left(  i\right)  =n}}a_{n,n}\right)
}_{\substack{=a_{n,n}^{\left\vert \left\{  i\in\left\{  1,2,\ldots
,n-1\right\}  \setminus\left\{  g\right\}  \ \mid\ f\left(  i\right)
=n\right\}  \right\vert }=a_{n,n}^{\left\vert f^{-1}\left(  n\right)
\right\vert -2}\\\text{(by (\ref{pf.rmk.n-potent.abut.a.1}))}}}\left(
\prod_{\substack{i\in\left\{  1,2,\ldots,n-1\right\}  \setminus\left\{
g\right\}  ;\\f\left(  i\right)  \neq n}}a_{f\left(  i\right)  ,n}\right) \\
&  =a_{n,n}^{\left\vert f^{-1}\left(  n\right)  \right\vert -2}\left(
\prod_{\substack{i\in\left\{  1,2,\ldots,n-1\right\}  ;\\f\left(  i\right)
\neq n}}a_{f\left(  i\right)  ,n}\right)  =\operatorname*{abut}\nolimits_{f}A
\end{align*}
(by the definition of $\operatorname*{abut}\nolimits_{f}A$). This proves
Remark \ref{rmk.n-potent.abut} \textbf{(b)}.

\textbf{(a)} Assume that $a_{n,n}\in\mathbb{K}$ is invertible. Fix
$g\in\left\{  1,2,\ldots,n-1\right\}  $ as in Remark \ref{rmk.n-potent.abut}
\textbf{(b)}. Then,%
\[
\prod_{i\in\left\{  1,2,\ldots,n-1\right\}  }a_{f\left(  i\right)
,n}=\underbrace{a_{f\left(  g\right)  ,n}}_{\substack{=a_{n,n}\\\text{(since
}f\left(  g\right)  =n\text{)}}}\underbrace{\prod_{\substack{i\in\left\{
1,2,\ldots,n-1\right\}  ;\\i\neq g}}a_{f\left(  i\right)  ,n}}%
_{\substack{=\operatorname*{abut}\nolimits_{f}A\\\text{(by Remark
\ref{rmk.n-potent.abut} \textbf{(b)})}}}=a_{n,n}\operatorname*{abut}%
\nolimits_{f}A,
\]
so that $\operatorname*{abut}\nolimits_{f}A=\dfrac{1}{a_{n,n}}\prod
_{i\in\left\{  1,2,\ldots,n-1\right\}  }a_{f\left(  i\right)  ,n}$. This
proves Remark \ref{rmk.n-potent.abut} \textbf{(a)}.
\end{proof}

\begin{proof}
[Proof of Lemma \ref{lem.supergen.sum1}.]We have $G=\left(  \sum_{k=1}%
^{n}b_{i,k}d_{i,j,k}\right)  _{1\leq i\leq m,\ 1\leq j\leq m}$. Thus, the
definition of a determinant yields%
\begin{align*}
\det G  &  =\sum_{\sigma\in S_{m}}\left(  -1\right)  ^{\sigma}%
\underbrace{\prod_{i=1}^{m}\left(  \sum_{k=1}^{n}b_{i,k}d_{i,\sigma\left(
i\right)  ,k}\right)  }_{\substack{=\sum_{f:\left\{  1,2,\ldots,m\right\}
\rightarrow\left\{  1,2,\ldots,n\right\}  }\prod_{i=1}^{m}\left(
b_{i,f\left(  i\right)  }d_{i,\sigma\left(  i\right)  ,f\left(  i\right)
}\right)  \\\text{(by the product rule)}}}\\
&  =\sum_{\sigma\in S_{m}}\left(  -1\right)  ^{\sigma}\sum_{f:\left\{
1,2,\ldots,m\right\}  \rightarrow\left\{  1,2,\ldots,n\right\}  }\prod
_{i=1}^{m}\left(  b_{i,f\left(  i\right)  }d_{i,\sigma\left(  i\right)
,f\left(  i\right)  }\right) \\
&  =\sum_{f:\left\{  1,2,\ldots,m\right\}  \rightarrow\left\{  1,2,\ldots
,n\right\}  }\sum_{\sigma\in S_{m}}\left(  -1\right)  ^{\sigma}%
\underbrace{\prod_{i=1}^{m}\left(  b_{i,f\left(  i\right)  }d_{i,\sigma\left(
i\right)  ,f\left(  i\right)  }\right)  }_{=\left(  \prod_{i=1}^{m}%
b_{i,f\left(  i\right)  }\right)  \left(  \prod_{i=1}^{m}d_{i,\sigma\left(
i\right)  ,f\left(  i\right)  }\right)  }\\
&  =\sum_{f:\left\{  1,2,\ldots,m\right\}  \rightarrow\left\{  1,2,\ldots
,n\right\}  }\left(  \prod_{i=1}^{m}b_{i,f\left(  i\right)  }\right)
\underbrace{\sum_{\sigma\in S_{m}}\left(  -1\right)  ^{\sigma}\left(
\prod_{i=1}^{m}d_{i,\sigma\left(  i\right)  ,f\left(  i\right)  }\right)
}_{\substack{=\det\left(  \left(  d_{i,j,f\left(  i\right)  }\right)  _{1\leq
i\leq m,\ 1\leq j\leq m}\right)  \\\text{(by the definition of a
determinant)}}}\\
&  =\sum_{f:\left\{  1,2,\ldots,m\right\}  \rightarrow\left\{  1,2,\ldots
,n\right\}  }\left(  \prod_{i=1}^{m}b_{i,f\left(  i\right)  }\right)
\det\left(  \left(  d_{i,j,f\left(  i\right)  }\right)  _{1\leq i\leq
m,\ 1\leq j\leq m}\right)  .
\end{align*}

\end{proof}

\begin{proof}
[Proof of Proposition \ref{prop.map.image}.]The elements $f^{0}\left(
i\right)  ,f^{1}\left(  i\right)  ,\ldots,f^{n}\left(  i\right)  $ are $n+1$
elements of the $n$-element set $\left\{  1,2,\ldots,n\right\}  $. Thus, by
the pigeonhole principle, we see that two of these elements must be equal. In
other words, there exist two elements $u$ and $v$ of $\left\{  0,1,\ldots
,n\right\}  $ such that $u<v$ and $f^{u}\left(  i\right)  =f^{v}\left(
i\right)  $. Consider these $u$ and $v$. We have $v\in\left\{  0,1,\ldots
,n\right\}  $, so that $v\leq n$ and thus $v-1\leq n-1$. Hence, $\left\{
0,1,\ldots,v-1\right\}  \subseteq\left\{  0,1,\ldots,n-1\right\}  $.

We have $u<v$, so that $u\leq v-1$ (since $u$ and $v$ are integers). Thus,
$u\in\left\{  0,1,\ldots,v-1\right\}  $ (since $u$ is a nonnegative integer).
Hence, $0\leq u\leq v-1$, so that $0\in\left\{  0,1,\ldots,v-1\right\}  $.

Let $S$ be the set $\left\{  f^{0}\left(  i\right)  ,f^{1}\left(  i\right)
,\ldots,f^{v-1}\left(  i\right)  \right\}  $. From $u\in\left\{
0,1,\ldots,v-1\right\}  $, we obtain $f^{u}\left(  i\right)  \in\left\{
f^{0}\left(  i\right)  ,f^{1}\left(  i\right)  ,\ldots,f^{v-1}\left(
i\right)  \right\}  =S$. From $0\in\left\{  0,1,\ldots,v-1\right\}  $, we
obtain $f^{0}\left(  i\right)  \in\left\{  f^{0}\left(  i\right)
,f^{1}\left(  i\right)  ,\ldots,f^{v-1}\left(  i\right)  \right\}  =S$.

Now,
\begin{equation}
f\left(  s\right)  \in S\ \ \ \ \ \ \ \ \ \ \text{for every }s\in S
\label{pf.prop.map.image.1}%
\end{equation}
\footnote{\textit{Proof of (\ref{pf.prop.map.image.1}):} Let $s\in S$.
\par
We have $s\in S=\left\{  f^{0}\left(  i\right)  ,f^{1}\left(  i\right)
,\ldots,f^{v-1}\left(  i\right)  \right\}  $. In other words, $s=f^{h}\left(
i\right)  $ for some $h\in\left\{  0,1,\ldots,v-1\right\}  $. Consider this
$h$. Thus, $f\left(  \underbrace{s}_{=f^{h}\left(  i\right)  }\right)
=f\left(  f^{h}\left(  i\right)  \right)  =f^{h+1}\left(  i\right)  $.
\par
We want to prove that $f\left(  s\right)  \in S$. We are in one of the
following two cases:
\par
\textit{Case 1:} We have $h=v-1$.
\par
\textit{Case 2:} We have $h\neq v-1$.
\par
Let us first consider Case 1. In this case, we have $h=v-1$. Hence, $h+1=v$.
Now, $f\left(  s\right)  =f^{h+1}\left(  i\right)  =f^{v}\left(  i\right)  $
(since $h+1=v$). Compared with $f^{u}\left(  i\right)  =f^{v}\left(  i\right)
$, this yields $f\left(  s\right)  =f^{u}\left(  i\right)  \in S$. Hence,
$f\left(  s\right)  \in S$ is proven in Case 1.
\par
Let us now consider Case 2. In this case, we have $h\neq v-1$. Combined with
$h\in\left\{  0,1,\ldots,v-1\right\}  $, this yields $h\in\left\{
0,1,\ldots,v-1\right\}  \setminus\left\{  v-1\right\}  =\left\{
0,1,\ldots,\left(  v-1\right)  -1\right\}  $, so that $h+1\in\left\{
0,1,\ldots,v-1\right\}  $. Thus, $f^{h+1}\left(  i\right)  \in\left\{
f^{0}\left(  i\right)  ,f^{1}\left(  i\right)  ,\ldots,f^{v-1}\left(
i\right)  \right\}  =S$. Hence, $f\left(  s\right)  =f^{h+1}\left(  i\right)
\in S$. Thus, $f\left(  s\right)  \in S$ is proven in Case 2.
\par
We have now proven $f\left(  s\right)  \in S$ in each of the two Cases 1 and
2. Thus, $f\left(  s\right)  \in S$ always holds. This proves
(\ref{pf.prop.map.image.1}).}.

Now, we can easily see that
\begin{equation}
f^{k}\left(  i\right)  \in S\ \ \ \ \ \ \ \ \ \ \text{for every }%
k\in\mathbb{N} \label{pf.prop.map.image.2}%
\end{equation}
\footnote{\textit{Proof of (\ref{pf.prop.map.image.2}):} We shall prove
(\ref{pf.prop.map.image.2}) by induction over $k$:
\par
\textit{Induction base:} We have $f^{0}\left(  i\right)  \in S$. In other
words, (\ref{pf.prop.map.image.2}) holds for $k=0$. This completes the
induction base.
\par
\textit{Induction step:} Let $K\in\mathbb{N}$. Assume that
(\ref{pf.prop.map.image.2}) holds for $k=K$. We must prove that
(\ref{pf.prop.map.image.2}) holds for $k=K+1$.
\par
We have assumed that (\ref{pf.prop.map.image.2}) holds for $k=K$. In other
words, $f^{K}\left(  i\right)  \in S$. Thus, (\ref{pf.prop.map.image.1})
(applied to $s=f^{K}\left(  i\right)  $) yields $f\left(  f^{K}\left(
i\right)  \right)  \in S$. Thus, $f^{K+1}\left(  i\right)  =f\left(
f^{K}\left(  i\right)  \right)  \in S$. In other words,
(\ref{pf.prop.map.image.2}) holds for $k=K+1$. This completes the induction
step. Hence, (\ref{pf.prop.map.image.2}) is proven by induction.}.

On the other hand,
\begin{align*}
S  &  =\left\{  f^{0}\left(  i\right)  ,f^{1}\left(  i\right)  ,\ldots
,f^{v-1}\left(  i\right)  \right\}  =\left\{  f^{s}\left(  i\right)
\ \mid\ s\in\left\{  0,1,\ldots,v-1\right\}  \right\} \\
&  \subseteq\left\{  f^{s}\left(  i\right)  \ \mid\ s\in\left\{
0,1,\ldots,n-1\right\}  \right\}  \ \ \ \ \ \ \ \ \ \ \left(  \text{since
}\left\{  0,1,\ldots,v-1\right\}  \subseteq\left\{  0,1,\ldots,n-1\right\}
\right)  .
\end{align*}
Hence, for every $k\in\mathbb{N}$, we have%
\begin{align*}
f^{k}\left(  i\right)   &  \in S\ \ \ \ \ \ \ \ \ \ \left(  \text{by
(\ref{pf.prop.map.image.2})}\right) \\
&  \subseteq\left\{  f^{s}\left(  i\right)  \ \mid\ s\in\left\{
0,1,\ldots,n-1\right\}  \right\}  .
\end{align*}
This proves Proposition \ref{prop.map.image}.
\end{proof}

\begin{proof}
[Proof of Proposition \ref{prop.map.preim}.]$\Longrightarrow:$ Assume that
$f^{n-1}\left(  i\right)  =n$. Thus, there exists some $k\in\mathbb{N}$ such
that $f^{k}\left(  i\right)  =n$ (namely, $k=n-1$). This proves the
$\Longrightarrow$ direction of Proposition \ref{prop.map.preim}.

$\Longleftarrow:$ Assume that there exists some $k\in\mathbb{N}$ such that
$f^{k}\left(  i\right)  =n$. Consider this $k$. We must show that
$f^{n-1}\left(  i\right)  =n$.

We have $n=f^{k}\left(  i\right)  \in\left\{  f^{s}\left(  i\right)
\ \mid\ s\in\left\{  0,1,\ldots,n-1\right\}  \right\}  $ (by Proposition
\ref{prop.map.image}). In other words, $n=f^{s}\left(  i\right)  $ for some
$s\in\left\{  0,1,\ldots,n-1\right\}  $. Consider this $s$.

We have $f^{s}\left(  i\right)  =n$. Using this fact (and the fact that
$f\left(  n\right)  =n$), we can prove (by induction over $h$) that%
\begin{equation}
f^{h}\left(  i\right)  =n\ \ \ \ \ \ \ \ \ \ \text{for every integer }h\geq s.
\label{pf.prop.n-potent.calc.1}%
\end{equation}

But $s\in\left\{  0,1,\ldots,n-1\right\}  $, so that $s\leq n-1$ and therefore
$n-1\geq s$. Hence, (\ref{pf.prop.n-potent.calc.1}) (applied to $h=n-1$)
yields $f^{n-1}\left(  i\right)  =n$. This proves the $\Longleftarrow$
direction of Proposition \ref{prop.map.preim}.
\end{proof}

\begin{proof}
[Proof of Proposition \ref{prop.n-potent.calc}.]$\Longleftarrow:$ Assume that
$f^{n-1}\left(  \left\{  1,2,\ldots,n\right\}  \right)  =\left\{  n\right\}
$. For every $i\in\left\{  1,2,\ldots,n\right\}  $, we have%
\[
f^{n-1}\left(  \underbrace{i}_{\in\left\{  1,2,\ldots,n\right\}  }\right)  \in
f^{n-1}\left(  \left\{  1,2,\ldots,n\right\}  \right)  =\left\{  n\right\}
\]
and thus $f^{n-1}\left(  i\right)  =n$. Hence, for every $i\in\left\{
1,2,\ldots,n\right\}  $, there exists some $k\in\mathbb{N}$ such that
$f^{k}\left(  i\right)  =n$ (namely, $k=n-1$). In other words, the map $f$ is
$n$-potent. This proves the $\Longleftarrow$ direction of Proposition
\ref{prop.n-potent.calc}.

$\Longrightarrow:$ Assume that the map $f$ is $n$-potent. Let $i\in\left\{
1,2,\ldots,n\right\}  $. Then, there exists some $k\in\mathbb{N}$ such that
$f^{k}\left(  i\right)  =n$ (since $f$ is $n$-potent). Thus, $f^{n-1}\left(
i\right)  =n$ (by the $\Longleftarrow$ direction of Proposition
\ref{prop.map.preim}).

Now, forget that we fixed $i$. We thus have shown that $f^{n-1}\left(
i\right)  =n$ for each $i\in\left\{  1,2,\ldots,n\right\}  $. Hence,%
\[
\left\{  f^{n-1}\left(  1\right)  ,f^{n-1}\left(  2\right)  ,\ldots
,f^{n-1}\left(  n\right)  \right\}  =\left\{  \underbrace{n,n,\ldots
,n}_{n\text{ times }n}\right\}  =\left\{  n\right\}  .
\]
Thus, $f^{n-1}\left(  \left\{  1,2,\ldots,n\right\}  \right)  =\left\{
f^{n-1}\left(  1\right)  ,f^{n-1}\left(  2\right)  ,\ldots,f^{n-1}\left(
n\right)  \right\}  =\left\{  n\right\}  $. This proves the $\Longrightarrow$
direction of Proposition \ref{prop.n-potent.calc}.
\end{proof}

\begin{proof}
[Proof of Corollary \ref{cor.n-potent.delta}.]We are in one of the following
two cases:

\textit{Case 1:} We have $f^{n-1}\left(  i\right)  =n$.

\textit{Case 2:} We have $f^{n-1}\left(  i\right)  \neq n$.

Let us consider Case 1 first. In this case, we have $f^{n-1}\left(  i\right)
=n$. Thus, $\delta_{f^{n-1}\left(  i\right)  ,n}=1$. But $f^{n}\left(
i\right)  =f\left(  \underbrace{f^{n-1}\left(  i\right)  }_{=n}\right)
=f\left(  n\right)  =n$, so that $\delta_{f^{n}\left(  i\right)  ,n}=1$.
Hence, $\delta_{f^{n-1}\left(  i\right)  ,n}=1=\delta_{f^{n}\left(  i\right)
,n}$. Thus, Corollary \ref{cor.n-potent.delta} is proven in Case 1.

Let us now consider Case 2. In this case, we have $f^{n-1}\left(  i\right)
\neq n$. Thus, $\delta_{f^{n-1}\left(  i\right)  ,n}=0$. On the other hand, we
have $f^{n}\left(  i\right)  \neq n$\ \ \ \ \footnote{\textit{Proof.} Assume
the contrary. Thus, $f^{n}\left(  i\right)  =n$. Hence, there exists some
$k\in\mathbb{N}$ such that $f^{k}\left(  i\right)  =n$ (namely, $k=n$). Thus,
$f^{n-1}\left(  i\right)  =n$ (according to the $\Longleftarrow$ direction of
Proposition \ref{prop.map.preim}). This contradicts $f^{n-1}\left(  i\right)
\neq n$. This contradiction proves that our assumption was wrong, qed.}.
Hence, $\delta_{f^{n}\left(  i\right)  ,n}=0$. Hence, $\delta_{f^{n-1}\left(
i\right)  ,n}=0=\delta_{f^{n}\left(  i\right)  ,n}$. Thus, Corollary
\ref{cor.n-potent.delta} is proven in Case 2.

Now, we have proven Corollary \ref{cor.n-potent.delta} in each of the two
Cases 1 and 2. Hence, Corollary \ref{cor.n-potent.delta} always holds.
\end{proof}

\begin{proof}
[Proof of Lemma \ref{lem.det.rowmult}.]The definition of $\det A$ yields $\det
A=\sum_{\sigma\in S_{m}}\left(  -1\right)  ^{\sigma}\prod_{i=1}^{m}%
a_{i,\sigma\left(  i\right)  }$ (since $A=\left(  a_{i,j}\right)  _{1\leq
i\leq m,\ 1\leq j\leq m}$). On the other hand, the definition of $\det\left(
\left(  b_{i}a_{i,j}\right)  _{1\leq i\leq m,\ 1\leq j\leq m}\right)  $ yields%
\begin{align*}
\det\left(  \left(  b_{i}a_{i,j}\right)  _{1\leq i\leq m,\ 1\leq j\leq
m}\right)   &  =\sum_{\sigma\in S_{m}}\left(  -1\right)  ^{\sigma
}\underbrace{\prod_{i=1}^{m}\left(  b_{i}a_{i,\sigma\left(  i\right)
}\right)  }_{=\left(  \prod_{i=1}^{m}b_{i}\right)  \left(  \prod_{i=1}%
^{m}a_{i,\sigma\left(  i\right)  }\right)  }\\
&  =\sum_{\sigma\in S_{m}}\left(  -1\right)  ^{\sigma}\left(  \prod_{i=1}%
^{m}b_{i}\right)  \left(  \prod_{i=1}^{m}a_{i,\sigma\left(  i\right)  }\right)
\\
&  =\left(  \prod_{i=1}^{m}b_{i}\right)  \underbrace{\sum_{\sigma\in S_{m}%
}\left(  -1\right)  ^{\sigma}\prod_{i=1}^{m}a_{i,\sigma\left(  i\right)  }%
}_{=\det A}=\left(  \prod_{i=1}^{m}b_{i}\right)  \det A.
\end{align*}
This proves Lemma \ref{lem.det.rowmult}.
\end{proof}

\begin{verlong}
\begin{proof}
[Second proof of Lemma \ref{lem.det.rowmult}.]The matrix $\left(  b_{i}%
a_{i,j}\right)  _{1\leq i\leq m,\ 1\leq j\leq m}$ is obtained from the matrix
$A$ by multiplying the $i$-th row by $b_{i}$ for each $i\in\left\{
1,2,\ldots,m\right\}  $. Hence, its determinant is obtained from the
determinant of $A$ by multiplying with $b_{1}$, with $b_{2}$, and so on. In
other words,%
\[
\det\left(  \left(  b_{i}a_{i,j}\right)  _{1\leq i\leq m,\ 1\leq j\leq
m}\right)  =\left(  \prod_{i=1}^{m}b_{i}\right)  \det A.
\]
This proves Lemma \ref{lem.det.rowmult}.
\end{proof}

\begin{proof}
[Third proof of Lemma \ref{lem.det.rowmult}.]We have $b_{i}a_{i,j}=\sum
_{k=1}^{1}b_{i}a_{i,j}$ (since $\sum_{k=1}^{1}b_{i}a_{i,j}=b_{i}a_{i,j}$) for
every $\left(  i,j\right)  \in\left\{  1,2,\ldots,m\right\}  ^{2}$. Thus,
$\left(  b_{i}a_{i,j}\right)  _{1\leq i\leq m,\ 1\leq j\leq m}=\left(
\sum_{k=1}^{1}b_{i}a_{i,j}\right)  _{1\leq i\leq m,\ 1\leq j\leq m}$. Thus,
Lemma \ref{lem.supergen.sum1} (applied to $n=1$, $b_{i,k}=b_{i}$,
$d_{i,j,k}=a_{i,j}$ and $G=\left(  b_{i}a_{i,j}\right)  _{1\leq i\leq
m,\ 1\leq j\leq m}$) yields%
\begin{align*}
&  \det\left(  \left(  b_{i}a_{i,j}\right)  _{1\leq i\leq m,\ 1\leq j\leq
m}\right) \\
&  =\sum_{f:\left\{  1,2,\ldots,m\right\}  \rightarrow\left\{  1,2,\ldots
,1\right\}  }\left(  \prod_{i=1}^{m}b_{i}\right)  \det\left(  \left(
a_{i,j}\right)  _{1\leq i\leq m,\ 1\leq j\leq m}\right) \\
&  =\left(  \prod_{i=1}^{m}b_{i}\right)  \det\underbrace{\left(  \left(
a_{i,j}\right)  _{1\leq i\leq m,\ 1\leq j\leq m}\right)  }_{=A}\\
&  \ \ \ \ \ \ \ \ \ \ \left(  \text{since there exists precisely one map
}f:\left\{  1,2,\ldots,m\right\}  \rightarrow\left\{  1,2,\ldots,1\right\}
\right) \\
&  =\left(  \prod_{i=1}^{m}b_{i}\right)  \det A.
\end{align*}
This proves Lemma \ref{lem.det.rowmult}.
\end{proof}
\end{verlong}

\end{document}